\LetLtxMacro\todonotestodo\todo
\renewcommand{\todo}[2][]{\todonotestodo[#1]{TODO: {#2}}}
\theoremstyle{definition}
\newtheorem{theorem}{Theorem}
\newtheorem*{rep@theorem}{\rep@title}
\newcommand{\newreptheorem}[2]{%
\newenvironment{rep#1}[1]{%
 \def\rep@title{#2 \ref{##1}}%
 \begin{rep@theorem}}%
 {\end{rep@theorem}}}
\newtheorem{lemma}{Lemma}[section]
\newtheorem{proposition}[lemma]{Proposition}
\newtheorem{corollary}[lemma]{Corollary}
\newtheorem{fact}[lemma]{Fact}
\newtheorem{remark}[lemma]{Remark}
\newtheorem*{claim*}{Claim}
\newtheorem{notation}[lemma]{Notation}
\newtheorem*{theorem*}{Theorem}
\newtheorem*{corollary*}{Corollary}
\newtheorem*{lemma*}{Lemma}
\newtheorem*{problem*}{Problem}
\newtheorem{problem}[lemma]{Problem}
\newcommand{\Q}{\mathbb{Q}}
\newcommand{\Z}{\mathbb{Z}}
\newcommand{\N}{\mathbb{N}}
\DeclareMathOperator{\Int}{Int}
\title[IVP\MakeLowercase{s} on VR\MakeLowercase{s} of global fields with prescribed lengths of factorizations]{Integer-valued polynomials on valuation rings of global fields with prescribed lengths of factorizations}
\thanks{\textit{Mathematics subject classification.} primary 11R09, 11C08, 13A05; secondary 12E05, 13F20, 13F05}
\thanks{\textit{Key words.} integer-valued polynomials, global fields, irreducible polynomials, factorizations, discrete valuations domains}
\author{Victor Fadinger}
\thanks{V. Fadinger is supported by the Austrian Science Fund (FWF): W1230}
\author{Sophie Frisch}
\thanks{S. Frisch is supported by the Austrian Science Fund (FWF): P~35788}
\author{Daniel Windisch}
\keywords{}
\thanks{D.~Windisch is supported by the Austrian Science Fund (FWF): P~30934}
\begin{document}

\begin{abstract}
Let $V$ be a valuation ring of a global field $K$. We show that for all positive integers $k$ and $1 < n_1 \leq \ldots \leq n_k$ there exists an integer-valued polynomial on $V$, that is, an element of $\Int(V) = \{ f \in K[X] \mid f(V) \subseteq V \}$, which has precisely $k$ essentially different factorizations into irreducible elements of $\Int(V)$ whose lengths are exactly $n_1,\ldots,n_k$. In fact, we show more, namely that the same result holds true for every discrete valuation domain $V$ with finite residue field such that the quotient field of $V$ admits a valuation ring independent of $V$ whose maximal ideal is principal or whose residue field is finite. If the quotient field of $V$ is a purely transcendental extension of an arbitrary field, this property is satisfied. This solves an open problem proposed by Cahen, Fontana, Frisch and Glaz in these cases.
\end{abstract}

\maketitle

\section{Introduction}

Non-unique factorization in integral domains has been a recurring topic in commutative ring theory ever since the phenomenon was first discovered in rings of integers in algebraic number fields.
The machinery developed in this setting generalizes to Dedekind domains
and even further, to potentially non-Noetherian, higher-dimensional
analogues of Dedekind domains, namely, Krull domains. Factorizations in Krull domains (more generally in Krull monoids) are well-studied and can be described by combinatorial structure depending only on the divisor class group and the distribution of prime divisors in the classes, see the monograph by Geroldinger and Halter-Koch \cite{GHK}.

In contrast to this, another important
generalization of Dedekind domains, namely, Pr\"ufer domains,
is not amenable to the existing methods and (so far) there is no general theory of non-unique factorization in this case. For this reason, the study of non-unique factorizations in Prüfer domains relies on ad-hoc arguments in each particular case. 
Pr\"ufer domains, characterized by the
condition that every finitely generated ideal is invertible, are
however, an important class of rings: a natural common generalization of
Dedekind domains and valuation rings.

The non-Noetherian Prüfer domain where  non-unique factorization was first studied is the ring of integer-valued polynomials on $\Z$, that is, $\Int(\Z)=\{f\in \Q[X]\mid f(\Z)\subseteq \Z\}$. Building on results by Cahen and Chabert \cite{elasticitycahen} and Chapman and McClain \cite{elasticitychapman}, the second author \cite{integers} showed that every finite multiset of integers $>1$ occurs as the set of lengths (of factorizations into irreducibles) of some polynomial in $\Int(\Z)$. This result was generalized to rings of integer-valued polynomials on Dedekind domains with infinitely many maximal ideals of finite index by Nakato, Rissner and the second author \cite{globalcase}.

The analogous question for integer-valued polynomials on discrete valuation domains with finite residue field is an open problem:
\begin{problem*}\cite[Problem 39]{open}
Analyze and describe non-unique factorization in $\Int(V)$, where $V$ is a DVR with finite residue field. 
\end{problem*}

Note that if $V$ has an infinite residue field then $\Int(V)=V[X]$, which has unique factorizations. In the non-trivial case of finite residue fields not much is known,
except for isolated facts about absolutely and non-absolutely irreducible
elements. An irreducible element is non-absolutely irreducible if some
of its powers factor non-uniquely. In rings of integers in number
fields, non-unique factorization into irreducibles is equivalent to
the existence of non-absolutely irreducible elements as Chapman and Krause~\cite{ChapmanKrause} showed. 
Nakato, Rissner and the second author \cite{graph,splitDVR} characterized when certain irreducible elements are absolutely irreducible in $\Int(V)$.  The binomial polynomials in $\Int(\Z)$ were shown to be absolutely irreducible by Rissner and the third author~\cite{binomial}. 

Returning to sets of lengths in $\Int(D)$, the methods used so far \cite{integers, globalcase} rely heavily on the existence of prime ideals of arbitrarily large index and, hence, do not apply to the case of discrete valuation domains. 

Using combinatorial linear algebra, we are able to approach this problem for discrete valuation domains in certain fields, obtaining the following

\begin{theorem*}
Let $V$ be a discrete valuation domain with finite residue field. Suppose that the quotient field $K$ of $V$ admits a valuation ring independent from $V$ whose maximal ideal is principal. Let $k$ be a positive integer and $1 < n_1 \leq \ldots \leq n_k$ integers. 

Then there exists an integer-valued polynomial $H \in \Int(V)$ which has precisely $k$ essentially different factorizations into irreducible elements of $\Int(V)$ whose lengths are exactly $n_1,\ldots,n_k$.
\end{theorem*}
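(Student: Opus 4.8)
The plan is to build $H$ in the shape $H = p^{-N}\,g_1 g_2\cdots g_m$, where $p$ is a uniformizer of $V$ and the $g_j\in V[X]$ are monic polynomials engineered via approximation, and then to reduce the description of \emph{all} factorizations of $H$ in $\Int(V)$ to a combinatorial problem about subsets of $\{1,\dots,m\}$ that can be designed to have exactly $k$ solutions of the prescribed lengths. First a normalization: let $\val$ denote the normalized valuation of $V$ and $q=|V/pV|$. Since the maximal ideal of $W$ is principal, the value group of the valuation $w$ attached to $W$ has a least positive element, hence $w$ is a rank-one discrete valuation inequivalent to $\val$; thus strong approximation for the two independent places $\{\val,w\}$ is available, and prescribed congruences modulo high powers of the maximal ideals of $V$ and of $W$ can be solved simultaneously in $K$.

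Using this I would construct the blocks $g_j$ (the array $\pblock{i}{j}$ of the construction) with two kinds of control. At $w$: each $g_j$ is Eisenstein (all non-leading coefficients in the maximal ideal of $W$, constant term a uniformizer of $W$), which forces $g_j$ to be irreducible over $K$. At $\val$: the function $x\mapsto\val(g_j(x))$ on $V$ has a prescribed profile, namely $g_j(V)\subseteq pV$, the $\val$-fixed divisor of $g_j$ equals a chosen value $d_j$, and the set $Z_j=\{x\in V:\val(g_j(x))=d_j\}$ of ``tight'' arguments is a prescribed union of residue classes modulo a high power of $pV$. Approximation makes these requirements simultaneously satisfiable and produces whole families of pairwise distinct (hence, being monic, non-associate) such blocks. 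The payoff is control of $\mathrm{fd}\!\left(\prod_{j\in S}g_j\right)=\min_{x\in V}\sum_{j\in S}\val(g_j(x))$ as a function of $S\subseteq\{1,\dots,m\}$: by arranging the $Z_j$ as a covering design I can prescribe exactly for which $S$ one has $\bigcap_{j\in S}Z_j=\emptyset$, and thereby make the fixed-divisor function \emph{strictly superadditive} on a chosen antichain of pairwise intersecting ``special'' subsets $P_1,\dots,P_k$ (with prescribed excesses $\Delta_i\ge 1$) while keeping it exactly additive on every other subset. This is the combinatorial linear algebra at the heart of the argument.

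Now the factorization analysis. Because each $g_j$ is irreducible over $K$ and the $g_j$ are pairwise non-associate, every factorization of $H$ in $\Int(V)\subseteq K[X]$ is obtained by partitioning $\{1,\dots,m\}$ into groups, attaching an exponent of $p$ to each group, and splitting off copies of the irreducible constant $p$. A group $J$ with exponent $a$ lies in $\Int(V)$ iff $a\le\mathrm{fd}\!\left(\prod_{j\in J}g_j\right)$; it is a non-unit iff $J\neq\emptyset$; and a check shows it is irreducible iff $a=\mathrm{fd}\!\left(\prod_{j\in J}g_j\right)$ and, moreover, either $|J|=1$ or the fixed-divisor function is strictly superadditive on $J$. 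Hence the groups that can occur in an irreducible factorization are exactly the singletons and the $P_i$; since the $P_i$ are pairwise intersecting, at most one of them occurs in any partition. Choosing $N$ slightly larger than $\sum_j d_j$ makes the all-singleton partition unable to absorb $N$ powers of $p$ (so it contributes no factorization) while, since $\Delta_i\ge N-\sum_j d_j$, each single-$P_i$ partition can; the valid factorizations are then in bijection with $\{P_1,\dots,P_k\}$, each being unique (all exponents forced, the surplus $\mathrm{fd}$ cashed out as copies of $p$). A short bookkeeping gives the $i$-th factorization length $1+(m-|P_i|)+\Delta_i-(N-\sum_j d_j)$; taking all $P_i$ to contain a common element and to have a common size $m-n_1+1$, one gets $\Delta_i=n_i-n_1+(N-\sum_jd_j)$ and lengths exactly $n_1,\dots,n_k$, all constraints ($\Delta_i\ge1$, fitting $k$ pairwise intersecting equi-sized subsets into $\{1,\dots,m\}$, $H\in\Int(V)$) being met once $m$ is taken large enough — here the hypothesis $n_i>1$ enters, since $\binom{m-1}{n_1-1}\ge k$ needs $n_1-1\ge 1$. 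The $k$ factorizations involve pairwise non-associate irreducible factors, hence are essentially different.

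The step I expect to be the real obstacle is this combinatorial core: first, \emph{realizing} the prescribed fixed-divisor function exactly, i.e. producing blocks whose tight sets $Z_j$ implement the desired covering design and the desired excesses $\Delta_i$ (this forces the blocks to have large degree, so one works with residue classes modulo high powers of $pV$ and must verify, place by place, that the approximation data are consistent); and second, and more delicately, proving that \emph{no} unforeseen factorization survives — ruling out factorizations produced by an unexpected redistribution of powers of $p$ among the blocks, or by accidental extra divisibility of some sub-product not built into the design — so that the count is exactly $k$ and respects the multiplicities of repeated values among the $n_i$. Getting the block array $\pblock{i}{j}$, the derived data $\inspn$ and $\outspn{i}{j}$, and the exponent $N$ to interlock so that the answer is precisely $k$ is where the work lies; the approximation-theoretic construction of the individual blocks is, by comparison, routine.
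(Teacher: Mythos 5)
Your route is genuinely different from the paper's, and as it stands it has a gap at exactly the point you flag as hard. The paper does not build a single block family $g_1,\dots,g_m$ and a superadditive fixed-divisor function on $2^{\{1,\dots,m\}}$. Instead it builds \emph{two} families, $F_i$ and $G_i$, both indexed by the hyperbox $[n_1]\times\cdots\times[n_k]$, with the $F_i$ engineered so that $\mathsf v(F_i(\cdot))$ is large only on one residue class $R_1$ and the $G_i$ so that $\mathsf v(G_i(\cdot))$ is large only on $R_2,\dots,R_q$. With that decoupling, the fixed divisor of a mixed product $\prod_{i\in I}F_i\prod_{j\in J}G_j$ becomes simply $\min\bigl(\sum_{i\in I}M_i,\ \sum_{j\in J}N_j\bigr)$ where $M,N$ are integer arrays, so the question of which subproducts split off is reduced to when two subset-sums of arrays agree. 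The combinatorial core (Lemmas 4.2--4.8) then consists in choosing $M,N$ so that $\sum_{i\in I}M_i=\sum_{j\in J}N_j$ forces $I=J$ to be a union of parallel hyperplanes in the box; these unions come in exactly $k$ ``directions'' of sizes $n_1,\dots,n_k$, giving the $k$ prescribed lengths without ever introducing constant factors of the uniformizer. In particular, $H$ is taken with denominator exactly equal to its fixed divisor, so the cited factorization lemma applies verbatim.

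In your single-family version, several of the requirements you impose are not mutually consistent as stated, and the central design is asserted rather than produced. First, demanding that the fixed-divisor function be ``exactly additive on every other subset'' is impossible once it is strictly superadditive on some $P_i$: every superset $S\supseteq P_i$ inherits $\operatorname{fd}(S)\ge\sum_{j\in S}d_j+\Delta_i$, so those $S$ cannot be additive. What you actually need is the weaker, sharper statement that every non-singleton $J$ other than the $P_i$ admits \emph{some} additive binary split, and this is the real content to prove. Second, because you choose $N$ strictly between $\sum_j d_j$ and the global fixed divisor, factorizations of $H$ pick up constant copies of $p$ whose number varies with the partition; that is precisely the $\Delta_i-(N-\sum_j d_j)$ term in your length formula. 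But this takes you outside the hypotheses of the standard reduction lemma for $\Int(V)$ (which requires the denominator to equal the global fixed divisor), so the ``every factorization comes from a partition with exponents forced'' claim needs its own proof here. Third, and most substantively, it is not clear that a system of tight sets $Z_j$ can realize the mixed requirements: different prescribed excesses $\Delta_i$ on pairwise intersecting $P_i$, zero excess on every subset not containing a $P_i$, and additivity (for decomposability) on every proper superset of a $P_i$ -- all simultaneously, for a set function that is constrained to be of the specific form $S\mapsto\min_{a\in V}\sum_{j\in S}\mathsf v(g_j(a))$. You correctly identify this as ``where the work lies,'' but without it there is no proof; the paper's two-family/hyperplane device exists precisely to sidestep this realizability problem by reducing it to a linear-algebra statement about subset sums of arrays. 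Your Eisenstein-at-$w$ use of the independent valuation to force irreducibility of the blocks is, on the other hand, the same device as the paper's glueing lemma.
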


It is implicit in our proof that the monic irreducible polynomials of degree $n$ lie dense (with respect to the $V$-adic topology on $K$) in the set of all monic polynomials of degree $n$ over a field $K$ as above. This is true in particular for global fields. From the above theorem, we immediately obtain the following corollary.

\begin{corollary*}
The conclusion of the
theorem holds in each of the following cases:
\begin{itemize}
\item[(1)] $V$ is a valuation ring of a global field.
\item[(2)] $V$ is a discrete valuation domain with finite residue field such that the quotient field of $V$ is a purely transcendental extension of an arbitrary field.
\item[(3)] $V$ is a discrete valuation domain with finite residue field such that the quotient field $K$ of $V$ is a finite extension of a field $L$ that admits a valuation ring independent from $V \cap L$ whose maximal ideal is principal or whose residue field is finite.
\end{itemize}

That is, in each of these three cases, for all positive integers $k$ and $1 < n_1 \leq \ldots \leq n_k$, there exists an integer-valued polynomial $H \in \Int(V)$ which has precisely $k$ essentially different factorizations into irreducible elements of $\Int(V)$ whose lengths are exactly $n_1,\ldots,n_k$.

\end{corollary*}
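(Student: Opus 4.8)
The plan is to deduce the corollary from the theorem --- or, more precisely, from the slightly more general form announced in the abstract, in which the independent valuation ring is required only to have principal maximal ideal \emph{or} finite residue field. In all three cases $V$ is a discrete valuation domain with finite residue field: this is assumed in (2) and (3), and in (1) it is the standard fact that every valuation ring of a global field is a DVR with finite residue field. So in each case the task reduces to producing one valuation ring $W$ of $K$, the quotient field of $V$, such that $W \neq V$, $W$ is independent from $V$, and $W$ has principal maximal ideal or finite residue field. Two elementary facts will carry most of the weight: \emph{(a)} a valuation overring of a rank-one valuation ring is either that ring itself or the whole field; consequently any two distinct rank-one valuation rings of a field are independent, so a DVR $W \neq V$ is automatically independent from the DVR $V$. \emph{(b)} If $K$ is algebraic over a subfield $L$, then the only valuation ring of $K$ containing $L$ is $K$ itself.

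Case (1) then follows immediately: a global field has infinitely many valuation rings, so one may pick some $W \neq V$ among them; $W$ is a DVR, so its maximal ideal is principal, and $W \neq V$ yields independence by \emph{(a)}. For case (2), write $K = F_0(t)$ with $t$ transcendental over a subfield $F_0$ (this is possible as $K$ is purely transcendental of positive transcendence degree over its ground field). As $p$ runs through the monic irreducibles of the polynomial ring $F_0[t]$, the localizations $F_0[t]_{(p)}$ form an infinite family of pairwise distinct DVRs with quotient field $K$; choosing one of them, call it $W$, with $W \neq V$, we again have that $W$ has principal maximal ideal and, by \emph{(a)}, is independent from $V$. In both cases the theorem applies.

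Case (3) is where the real work sits. Put $n = [K:L] < \infty$. Since $V \neq K$ and $K/L$ is algebraic, $V$ cannot contain $L$ by \emph{(b)}, so $V \cap L$ is a proper valuation ring of $L$; by hypothesis $L$ carries a valuation ring $W_L$ that is independent from $V \cap L$ and whose maximal ideal is principal or whose residue field is finite. Extend $W_L$ to a valuation ring $W$ of $K$ (possible because $K/L$ is algebraic), so that $W \cap L = W_L$. If $W_L$ has principal maximal ideal it is a DVR, and then so is $W$: the value group $\Gamma_W$ contains the value group $\mathbb{Z}$ of $W_L$ with index $e(W/W_L) \leq n$, and a torsion-free abelian group having $\mathbb{Z}$ as a finite-index subgroup is infinite cyclic, whence $W$ has principal maximal ideal. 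If instead $W_L$ has finite residue field, the residue field of $W$ is an extension of it of degree at most $n$, hence finite. Either way $W$ has the required property. It remains to check independence: if $U$ is a valuation ring of $K$ with $W \subseteq U$ and $V \subseteq U$, then $U \cap L$ is a valuation ring of $L$ containing $W_L = W \cap L$ and $V \cap L$, so $U \cap L = L$ by their independence; hence $U \supseteq L$, and $U = K$ by \emph{(b)}. The theorem then applies. (Case (1) can also be viewed this way, with $L = \mathbb{Q}$ and $W_L = \mathbb{Z}_{(q)}$ for a prime $q$ below which $V$ does not lie, or with $L = \mathbb{F}_p(t)$ treated as in case (2).)

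I do not expect any real obstacle beyond case (3), where two points deserve attention: that extending a valuation ring along a finite field extension keeps the maximal ideal principal --- this is the value-group argument above --- and keeps independence from $V$ --- which is precisely where algebraicity of $K/L$ and fact \emph{(b)} enter. Everything else is routine bookkeeping about valuation rings. Finally, as the text notes just after the theorem, the density of the degree-$n$ monic irreducible polynomials among the degree-$n$ monic polynomials (which holds in particular over global fields) is already absorbed into the proof of the theorem itself, so it does not resurface as a separate ingredient here.
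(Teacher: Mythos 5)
Your overall route is the same as the paper's: reduce each case to the theorem by exhibiting a valuation ring $W$ of $K$ independent from $V$ with principal maximal ideal or finite residue field. Cases (1) and (2) match the paper's argument (the paper also just notes that $K$ has infinitely many discrete valuations and implicitly uses, as you do explicitly, that distinct rank-one valuation rings are automatically independent). Your unpacking of the independence argument in case (3) via $U \cap L$ is a correct and welcome elaboration of what the paper merely asserts.

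However, case (3) contains a genuine error. You write ``If $W_L$ has principal maximal ideal it is a DVR,'' and then run a value-group argument with $\Gamma_{W_L} = \mathbb{Z}$. This is false: a valuation ring can have principal maximal ideal and arbitrary rank. Remark~\ref{remark:valuation} in the paper records the correct characterization --- principal maximal ideal is equivalent to the value group having a minimal element $>0$, which is strictly weaker than being $\mathbb{Z}$ (e.g.\ $\mathbb{Z}^2$ ordered lexicographically). Nothing in the hypothesis of (3) makes $W_L$ discrete, so the chain ``$\Gamma_{W_L}=\mathbb{Z}$, hence $\Gamma_W$ cyclic, hence $W$ a DVR'' does not apply. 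The conclusion is nonetheless true, and can be repaired: by Bourbaki (VI, \S 8.3, Thm.~1, as cited in the paper), $[\Gamma_W : \Gamma_{W_L}] = e$ is finite. If $\pi$ is the minimal positive element of $\Gamma_{W_L}$, then any two distinct $\gamma_1 < \gamma_2$ in $\Gamma_W$ with $0 < \gamma_i \leq \pi$ lie in distinct cosets of $\Gamma_{W_L}$ (otherwise $0 < \gamma_2-\gamma_1 < \pi$ would be a smaller positive element of $\Gamma_{W_L}$), so $\{\gamma \in \Gamma_W : 0 < \gamma \leq \pi\}$ has at most $e$ elements and hence a least one; by Remark~\ref{remark:valuation} again, the maximal ideal of $W$ is principal. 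The residue-field half of your case (3) is fine, as $f(W/W_L) \leq [K:L]$. So: same strategy as the paper, with one incorrect step in case (3) whose conclusion survives under the correct argument.
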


\section{Preliminaries}

\textbf{Factorizations.} We give an informal presentation of factorizations. The interested reader is refered to the monograph by Geroldinger and Halter-Koch \cite{GHK} for a systematic introduction.

Let $R$ be an integral domain and $r\in R$. We say that $r$ is \textit{irreducible} (in $R$) if it cannot be written as the product of two nonunits of $R$. A \textit{factorization} of $r$ is a decomposition
\[r=a_1\cdots a_n\]
into irreducible elements $a_i$ of $R$. In this case $n$ is called the \textit{length} of this factorization of $r$. Let $s$ be a further element of $R$. We say that $r$ and $s$ are associated if there exists a unit $\varepsilon\in R$ such that $r=\varepsilon s$. We want to consider factorizations up to order and associates. In other words two factorizations
\[r=a_1\cdots a_n=u_1\cdots u_m\]
of $r$ are \textit{essentially the same} if $n=m$ and, after re-indexing if necessary, $u_i$ is associated to $a_i$ for all $i\in \{1,\ldots ,n\}$. Otherwise, the factorizations are called \textit{essentially different}.\\

\textbf{Valuations.} Let $K$ be a field. A valuation $\mathsf v$ on $K$ is a map
\[\mathsf v:K^\times\to G\]
where $(G,+,\leq)$ is a totally ordered Abelian group, subject to the following conditions for all $a,b\in K^\times$:
\begin{enumerate}
\item $\mathsf v(a\cdot b)=\mathsf v(a) +\mathsf v(b)$ and
\item $\mathsf v(a+b)\geq \inf\{\mathsf v(a),\mathsf v(b)\}$.
\end{enumerate}
The set $\{0\}\cup\{x\in K^\times\mid \mathsf v(x)\geq 0\}$ is called the \textit{valuation ring} of $\mathsf v$. It is a subring of $K$ with quotient field $K$ and unique maximal ideal $\{0\}\cup\{x\in K^\times\mid \mathsf v(x)> 0\}$. 

The group $\mathsf{v}(K^\times)$ is called the \textit{value group} of $\mathsf{v}$.

We will often use implicitly the following fact about valuations, which follows from the definition by an easy exercise: If $\mathsf v$ is a valuation on $K$ and $a,b\in K$ are such that $\mathsf v(a)\neq \mathsf v(b)$ then 
$$\mathsf v(a+b)= \inf\{\mathsf v(a),\mathsf v(b)\}.$$

If $v(K^\times)\cong \Z$ we call $\mathsf v$ a \textit{discrete valuation} (by the more precise terminology of Bourbaki it would be a \textit{discrete rank one valuation}). If $\mathsf v$ is a discrete valuation on $K$, then there exists a valuation $\mathsf w:K^\times \to \Z$ with $\mathsf w(K^\times)=\Z$ and the same valuation ring as $\mathsf v$. We call $\mathsf w$ the \textit{normalized valuation} of this valuation ring.

For a general introduction to valuations, see \cite{Bourbaki}.\\

\textbf{Discrete valuation domains.} An integral domain $V$ is said to be a \textit{discrete valuation domain} (DVR) if it satisfies one of the following equivalent statements:
\begin{enumerate} 
\item $V$ is the valuation ring of a discrete valuation on a field.
\item $V$ is a unique factorization domain with a unique prime element up to associates.
\item $V$ is a principal ideal domain with a unique non-zero prime ideal.
\item $V$ is a local Dedekind domain but not a field.
\end{enumerate}

If $V$ is a DVR with normalized valuation $\mathsf v$ then the prime elements of $V$ (which are all associated) are precisely the elements $p\in V$ with $\mathsf v(p)=1$.

If $M$ is the unique maximal ideal of $V$ then $V/M$ is called its \textit{residue field}.\\

\textbf{Fields.} By a \textit{global field} we mean a finite extension either of the field of rational numbers $\Q$ or of a field of rational functions $\mathbb F(T)$ in one variable over a finite field $\mathbb F$. The first type is refered to as \textit{algebraic number field} and the second as \textit{algebraic function field}. Note that every valuation ring of a global field is a discrete valuation domain with finite residue field.\\


\textbf{Integer-valued polynomials.} Let $R$ be an integral domain with quotient field $K$. The set
\[\text{Int}(R)=\{f\in K[X]\mid f(R)\subseteq R\}\] 
is a subring of $K[X]$ and called the \textit{ring of integer-valued polynomials} on $R$.
Let $V$ be the valuation ring of valuation $\mathsf v$ on a field $K$. Every element $f\in K[X]$ can be written in the form $f=\frac{g}{d}$, where $g\in V[X]$ and $d\in V\setminus\{0\}$. It is immediate that $f\in \Int(V)$ if and only if $\min_{a\in V} \mathsf v(f(a))\geq \mathsf v(d)$.

For a detailed treatment of integer-valued polynomials we refer to the monograph by Cahen and Chabert \cite{cahen}.

\section{Glueing of polynomials}

Let $V$ be a discrete valuation domain with finite residue field. Let $K$ be the quotient field of $V$. The purpose of this section is to construct monic polynomials in $V[X]$ of a given degree that are irreducible over $K$ and behave similarly as a given product of linear factors with respect to the valuation of $V$. We can solve this problem in two cases, see Lemma~\ref{lemma:glueing} and Lemma~\ref{lemma:glueing-finite}. We understand this as a sort of glueing process of linear factors into something indecomposable.

\begin{remark}\label{remark:valuation}
Let $K$ be a field and $W$ a valuation domain of $K$ with corresponding valuation $\mathsf{w}$ and maximal ideal $M$. The following are easily seen to be equivalent:
\begin{itemize}
\item[(a)] $M$ is principal and $W$ is not a field.
\item[(b)] The value group of $\mathsf{w}$ has a minimal element $>0$.
\item[(c)] $M \neq M^2$.
\end{itemize}
We, therefore, use these three properties interchangeably throughout the manuscript.
\end{remark}

\begin{proof}
Since (a) and (b) are clearly equivalent, we only have to argue that (a) is equivalent to (c). Suppose that $M = M^2$. Either $M = (0)$, in which case $W$ is a field, or $M \neq (0)$. In this case, let $x \in M\setminus \{0\}$, that is, $\mathsf{w}(x) > 0$.  Since $x \in M^2$, there exist $x_1,\ldots,x_n,y_1,\ldots,y_n \in M$ such that $x = \sum_{i = 1}^n x_iy_i$. Now,
\[ \mathsf{w}(x) \geq \min_i \mathsf{w}(x_iy_i) \]
and hence $\mathsf{w}(x) \geq \mathsf{w}(x_i) + \mathsf{w}(y_i)$ for some $i$. Since $\mathsf{w}(x_i), \mathsf{w}(y_i) >0$, it follows that $\mathsf{w}(x)$ cannot be minimal $>0$.

Conversely, suppose that the value group of $\mathsf{w}$ does not have a minimal element $>0$. Let $x \in M$ and $y \in M$ with $\mathsf{w}(x) > \mathsf{w}(y) > 0$. Pick $z \in M$ with $\mathsf{w}(z) = \mathsf{w}(x) - \mathsf{w}(y) > 0$. Then $\mathsf{w}(x) = \mathsf{w}(yz)$ and therefore there exists $\varepsilon \in W^\times$ such that $x = (\varepsilon y) \cdot z \in M^2$.
\end{proof}

The following is a known irreducibility criterion, see the text book by Matsumura~\cite{matsumura}. We include a simple proof for the special case we need.

\begin{lemma}\label{lemma:Eisenstein}
\cite[§29, Lemma 1 and its proof]{matsumura} Let $W$ be an integrally closed local domain with quotient field $K$ and let $N$ be the maximal ideal of $W$. Let $F = \sum_{i=0}^n d_i X^i \in W[X]$ with the following properties:
\begin{itemize}
\item[(i)] $d_n \notin N$.
\item[(ii)] $d_i \in N$ for all $i \in \{0, \ldots, n-1\}$.
\item[(iii)]  $d_0 \notin N^2$.
\end{itemize}
Then $F$ is irreducible in $W[X]$ and in $K[X]$.
\end{lemma}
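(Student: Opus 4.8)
The plan is to run the classical proof of Eisenstein's criterion, the only subtlety being that $W$ is not assumed to be a unique factorization domain, so that Gauss's lemma must be replaced by integral closedness. First I would reduce to the monic case: since $W$ is local and $d_n \notin N$, the coefficient $d_n$ is a unit of $W$, so $F_1 := d_n^{-1} F \in W[X]$ is monic, satisfies the same three hypotheses, and is irreducible in $W[X]$ (resp.\ in $K[X]$) exactly when $F$ is. For the $W[X]$ part one uses that a nonzero constant of $W[X]$ dividing a monic polynomial is a unit, so that in any nontrivial factorization of $F_1$ over $W$ both factors have positive degree and unit leading coefficient, and hence, after scaling by units, may be taken monic; in particular such a factorization is also a nontrivial factorization in $K[X]$.

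The heart of the argument is to show that any factorization $F_1 = G H$ in $K[X]$ with $G$ and $H$ monic of positive degree already lives in $W[X]$. Fixing an algebraic closure $\overline{K}$ of $K$, all roots of the monic polynomial $F_1 \in W[X]$ are integral over $W$; the coefficients of $G$ and of $H$ are, up to sign, elementary symmetric polynomials in subsets of these roots, hence integral over $W$, and therefore lie in $W$ because $W$ is integrally closed in $K$. Thus $G, H \in W[X]$. Now reduce modulo $N$: by (i) and (ii) we have $\overline{F_1} = X^n$ in $(W/N)[X]$, which is a polynomial ring over the field $W/N$ and hence a unique factorization domain, so $\overline{F_1} = \overline{G}\,\overline{H}$ with $\overline{G}, \overline{H}$ monic of positive degree forces $\overline{G} = X^r$ and $\overline{H} = X^s$ with $r, s \geq 1$. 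Then the constant terms of $G$ and of $H$ both lie in $N$, whence the constant term $d_n^{-1} d_0$ of $F_1$ lies in $N^2$; since $d_n^{-1}$ is a unit, this gives $d_0 \in N^2$, contradicting (iii). This proves that $F_1$, and hence $F$, is irreducible in $K[X]$; combined with the first paragraph, it is also irreducible in $W[X]$.

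The step I expect to require the most care is the passage from a factorization over $K$ to one over $W$: without a factoriality hypothesis on $W$ there is no content argument available, and it is precisely integral closedness that makes this go through, which is also why the lemma is phrased for integrally closed local domains rather than, say, for discrete valuation domains. Everything else is the standard Eisenstein bookkeeping.
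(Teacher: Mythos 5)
Your proof is correct, but it inverts the order of the paper's argument and fills in a citation with an explicit argument. The paper proves irreducibility in $W[X]$ first, by reducing $F$ modulo $N$ without normalizing to the monic case (so $\overline F = \overline{d_n}X^n$ and one checks that the reduced degrees of the two putative factors are positive because they add up to $n$), and only then invokes a cited result of Bourbaki — that a monic polynomial over an integrally closed domain $W$ which is irreducible in $W[X]$ is irreducible in $K[X]$ — to pass to $K[X]$. You instead reduce to the monic case immediately, observe that over a local domain a nontrivial $W[X]$-factorization of a monic polynomial automatically has both factors of positive degree with unit leading coefficient, and then prove $K[X]$-irreducibility directly by showing that any monic $K[X]$-factorization already descends to $W[X]$: the roots of $F_1$ in $\overline K$ are integral over $W$, hence the coefficients of the monic factors (being elementary symmetric functions of subsets of these roots) are integral over $W$ and lie in $K$, so by integral closedness lie in $W$. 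This step is exactly the content of the Bourbaki proposition the paper cites, so your proof is self-contained where the paper's is not; the trade-off is that the paper's $W[X]$ argument is a line or two shorter since it never normalizes the leading coefficient. The Eisenstein bookkeeping mod $N$ at the end (forcing both constant terms into $N$, hence $d_0 \in N^2$, contradiction) is identical in both.
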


\begin{proof}
We first show that $F$ is not a product of two non-constants in $W[X]$. Assume to the contrary that $F = ST$ where $S,T \in W[X]\setminus W$. Then
\[ \overline{S} \cdot \overline{T} = \overline{F} = \overline{d_n} X^n, \]
where $\overline{ \ \cdot \ }$ denotes the reduction modulo $N$. Since $W/N$ is a field, it follows that
\[\overline{S} = \overline{b}X^s, \overline{T} = \overline{c}X^t, \]
where $\overline{b} \cdot \overline{c} = \overline{d_n} \neq 0$ and $s+t = n$, $s \neq n \neq t$. So the constant terms of $S$ and $T$ lie in $N$, contradicting $d_0 \notin N^2$. Since $d_n$ is not in $N$, we also cannot factor out a non-unit constant in $W[X]$. This shows that $F$ is irreducible in $W[X]$.

Since $d_n \notin N$, we can assume without loss of generality that $F$ is monic. Now $W$ is integrally closed, whence $F$ is also irreducible in $K[X]$~\cite[Chapter 5, § 1.3, Proposition 11]{Bourbaki}.
\end{proof}

\begin{lemma}\label{lemma:glueing}
Let $V$ be a discrete valuation domain with finite residue field. Suppose that the quotient field $K$ of $V$ admits a valuation domain independent from $V$ whose maximal ideal is non-zero principal. Let $\mathsf v: K^\times \to \Z$ be the normalized valuation of $V$ and $R_1,\ldots,R_q$ the residue classes of $V$. For each $k \in \{1,\ldots,q\}$ choose $r_k \in R_k$ arbitrary. 

Let $f = \prod_{i=1}^n (X-a_i)$, where $a_1,\ldots,a_n \in V$ with $\mathsf{v}(r_k-a_i) \in \{0,1\}$ for all $i,k$.

Then there exists $F \in V[X]$ irreducible over $K$ with $\deg(F) = n$ such that 
\[ \min \{ \mathsf v(f(a)) \mid a \in  R_k\} = \min\{ \mathsf v(F(a)) \mid a\in R_k\}=\mathsf v(F(r_k))\] 
for all $k \in \{1,\ldots,q\}$.
\end{lemma}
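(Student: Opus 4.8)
The plan is to build $F$ as a perturbation of $f$ by a term supported at the independent valuation $\mathsf w$, so that the new polynomial becomes Eisenstein-like with respect to $\mathsf w$ (hence irreducible over $K$ by Lemma~\ref{lemma:Eisenstein}) while leaving all the relevant $\mathsf v$-values of $f$ untouched. Let $W$ be the valuation domain independent from $V$, with normalized-in-the-appropriate-sense valuation $\mathsf w$ and maximal ideal $N = (\pi)$ for some $\pi$ with $\mathsf w(\pi)$ minimal $>0$ (Remark~\ref{remark:valuation}). Write $f = \sum_{i=0}^n c_i X^i$ with $c_i\in V$ and $c_n = 1$; I will look for $F = f + \pi\cdot g - (\text{correction of the constant term})$, or more cleanly $F = \sum_{i=0}^n d_i X^i$ where $d_n = 1$, $d_i \equiv c_i$ modulo a high power of $\mathsf v$ for each $i$, and additionally $\mathsf w(d_i)>0$ for $i<n$ while $\mathsf w(d_0)$ equals exactly $\mathsf w(\pi)$ (so $d_0\in N\setminus N^2$). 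Since $V$ and $W$ are independent, by the approximation theorem for independent valuations I can choose each $d_i$ to simultaneously (a) be $\mathsf v$-adically extremely close to $c_i$ — close enough that $\mathsf v(f(a)) = \mathsf v(F(a))$ for every $a$ in the finitely many residue classes, evaluated at the chosen representatives $r_k$ and at all the $a_j$ — and (b) satisfy the prescribed $\mathsf w$-conditions.

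The first key step is to make precise how $\mathsf v$-close $d_i$ must be to $c_i$. Set $M_k := \min\{\mathsf v(f(a)) : a\in R_k\}$; the hypothesis $\mathsf v(r_k - a_i)\in\{0,1\}$ forces $M_k$ to be bounded (in fact $M_k = \sum_i \mathsf v(r_k - a_i) \le n$, since on a residue class $R_k$ the value $\mathsf v((a - a_i))$ is constant and equals $\mathsf v(r_k-a_i)$ unless that is $0$ — one has to check the ultrametric bookkeeping here). Pick $L$ strictly larger than $\max_k M_k$. If $d_i \equiv c_i \pmod{\mathfrak m_V^{L+n}}$ where $\mathfrak m_V$ is the maximal ideal of $V$, then for every $a\in V$ we get $\mathsf v(F(a) - f(a)) \ge L+n > L > M_k \ge \mathsf v(f(a))$ whenever $a\in R_k$, so $\mathsf v(F(a)) = \mathsf v(f(a))$; taking minima over $R_k$ gives the middle equality, and $\mathsf v(F(r_k)) = \mathsf v(f(r_k)) = M_k$ gives the outer one, provided I also note $\mathsf v(f(r_k)) = M_k$, i.e.\ that the minimum of $\mathsf v(f(\cdot))$ on $R_k$ is attained at $r_k$ — this again follows from the $\{0,1\}$ hypothesis, since replacing $a$ by $r_k$ changes $\mathsf v(a - a_i)$ only in the case $\mathsf v(r_k-a_i)=1$ where it can only go up or stay, but more carefully: if $\mathsf v(r_k - a_i)=0$ it stays $0$, and if $\mathsf v(r_k-a_i)=1$ then $\mathsf v(a - a_i)\ge 1$ for all $a\in R_k$ so $r_k$ is a minimizer coordinatewise.

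The second key step is the simultaneous choice of the $d_i$. Because $\mathsf v$ and $\mathsf w$ are independent valuations on $K$, the weak approximation theorem lets me, for each $i$, find $d_i\in K$ with $\mathsf v(d_i - c_i) \ge L+n$ (hence $d_i\in V$, as $c_i\in V$) and with $\mathsf w(d_i)$ prescribed: for $0<i<n$ I require $\mathsf w(d_i) > 0$, for $i=0$ I require $\mathsf w(d_0) = \mathsf w(\pi)$, and for $i=n$ I simply keep $d_n = 1$ (which already has $\mathsf w(d_n)=0$, so $d_n\notin N$; note $c_n = 1$ so no perturbation is needed there). One subtlety: $c_0 = f(0) = \prod(-a_i)$ might itself already lie in $N$ or $N^2$, but that is irrelevant — approximation lets me override the $\mathsf w$-value of $d_0$ freely regardless of where $c_0$ sits, as long as I do not insist on $\mathsf w(d_0 - c_0)$ being large, only $\mathsf v(d_0 - c_0)$ large. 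With these $d_i$, the polynomial $F = \sum d_i X^i$ lies in $V[X]$, is monic of degree $n$, and satisfies hypotheses (i)–(iii) of Lemma~\ref{lemma:Eisenstein} with respect to $(W, N)$: $d_n = 1\notin N$, $d_i\in N$ for $i<n$, and $d_0\in N\setminus N^2$. Since $W$ is integrally closed (it is a valuation domain) and local, Lemma~\ref{lemma:Eisenstein} yields that $F$ is irreducible in $K[X]$. Combined with the first step, $F$ has all the required properties.

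The main obstacle I anticipate is purely the ultrametric bookkeeping in the first step — verifying that the $\{0,1\}$-hypothesis on $\mathsf v(r_k - a_i)$ really does make $r_k$ a uniform minimizer of $\mathsf v(f(\cdot))$ on $R_k$ and bounds $M_k$, and pinning down precisely how large $L$ (equivalently, how $\mathsf v$-close the $d_i$) must be so that the perturbation is invisible not just at $r_k$ but across the entire residue class $R_k$. The approximation-theorem step is standard once one observes that independence of the two valuation rings is exactly the hypothesis needed to apply weak approximation, and the irreducibility is immediate from the quoted Lemma~\ref{lemma:Eisenstein}.
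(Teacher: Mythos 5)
Your proposal follows essentially the same route as the paper: perturb the coefficients of $f$ via the approximation theorem for independent valuations so that the resulting polynomial becomes Eisenstein with respect to $\mathsf{w}$ (hence irreducible by Lemma~\ref{lemma:Eisenstein}), while making the perturbation $\mathsf{v}$-adically small enough to preserve the relevant valuations on each residue class. The paper's version takes $\mathsf{v}(c_i)=n+1$ for the perturbing coefficients (exploiting the observation that $\mathsf{v}(f(r_k))=\sum_i\mathsf{v}(r_k-a_i)\le n$), which is a tighter threshold than your $L+n$ but plays the same role; both prescribe $\mathsf{w}$-values putting the constant term in $N\setminus N^2$ and all lower coefficients in $N$.

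One inaccuracy worth flagging in your bookkeeping step: the chain $\mathsf{v}(F(a)-f(a))\ge L+n > L > M_k \ge \mathsf{v}(f(a))$ for all $a\in R_k$ is wrong in its last link, since $M_k=\min_{a\in R_k}\mathsf{v}(f(a))$, so $M_k\le\mathsf{v}(f(a))$, not $\ge$. Consequently the claimed equality $\mathsf{v}(F(a))=\mathsf{v}(f(a))$ for \emph{every} $a\in R_k$ is false: if $\mathsf{v}(f(a))$ exceeds the size of the perturbation, the perturbation can dominate. What is actually true, and what is all you need, is the one-sided bound $\mathsf{v}(F(a))\ge\min\{\mathsf{v}(f(a)),\ \mathsf{v}(F(a)-f(a))\}\ge M_k$ for all $a\in R_k$, together with the exact equality $\mathsf{v}(F(r_k))=\mathsf{v}(f(r_k))=M_k$ at the representative (where $\mathsf{v}(f(r_k))\le n$ strictly beats the perturbation). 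This is precisely how the paper's proof is phrased; it never asserts coefficientwise or pointwise equality of $\mathsf{v}(F)$ and $\mathsf{v}(f)$. With that correction your argument is sound and matches the paper's.
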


\begin{proof}
Let $f = \prod_{i=1}^n (X-a_i) = \sum_{i = 0}^n b_i X^i$. Let $\mathsf{w}$ be a valuation on $K$ independent from $\mathsf{v}$ whose value group admits a minimal element $\pi>0$, see Remark~\ref{remark:valuation}. Choose $c_0,\ldots,c_{n-1} \in K$ such that $\mathsf{v}(c_i) = n+1$ and $\mathsf{w}(b_i+c_i) = \pi$ for all $i \in \{0,\ldots,n-1\}$ which is possible by the Approximation Theorem for independent valuations~\cite[Theorem 22.9]{Gilmer}. Let $F = f + \sum_{i = 0}^{n-1} c_i X^i$ which is irreducible over $K$ by applying Lemma~\ref{lemma:Eisenstein} with respect to $\mathsf{w}$. Clearly, $\deg(F) = n$.

Let $k \in \{1,\ldots,q\}$. Then $\mathsf{v}(f(r_k)) = \min \{ \mathsf v(f(a)) \mid a \in  R_k\}$. Also 
\[ \mathsf{v}(F(r_k)) = \min \{\mathsf{v}(f(r_k)),\mathsf{v}(\sum_{i = 0}^{n-1} c_i r_k^i)\} = \mathsf{v}(f(r_k)),\]
because $\mathsf{v}(c_i) = n+1$ and therefore $\mathsf{v}(f(r_k)) \leq n < \mathsf{v}(\sum_{i = 0}^{n-1} c_i r_k^i)$. If now $a \in R_k$ then
\begin{align*}
\mathsf{v}(F(a)) &= \mathsf{v}(f(a) + \sum_{i = 0}^{n-1} c_i a^i) \\
				&\geq \min \{\mathsf{v}(f(a)),\mathsf{v}(\sum_{i = 0}^{n-1} c_i a^i)\} \\
				&\geq \mathsf{v}(f(r_k)) = \mathsf{v}(F(r_k)).
\end{align*}
\end{proof}

\begin{lemma}\label{lemma:glueing-finite}
Let $V$ be a discrete valuation domain with finite residue field whose quotient field $K$ admits a valuation domain $W$ independent from $V$ whose residue field is also finite. Let $\mathsf v: K^\times \to \Z$ be the normalized valuation of $V$ and $R_1,\ldots,R_q$ the residue classes of $V$. For each $k \in \{1,\ldots,q\}$ choose $r_k \in R_k$ arbitrary.

Let $f = \prod_{i=1}^n (X-a_i)$, where $a_1,\ldots,a_n \in V$ with $\mathsf{v}(r_k-a_i) \in \{0,1\}$ for all $i,k$.

Then there exists $F \in V[X]$ irreducible over $K$ with $\deg(F) = n$ such that 
\[\min \{ \mathsf v(f(a)) \mid a \in  R_k\} = \min\{ \mathsf v(F(a)) \mid a\in R_k\}=\mathsf v(F(r_k))\] 
for all $k \in \{1,\ldots,q\}$.
\end{lemma}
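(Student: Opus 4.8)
The plan is to follow the proof of Lemma~\ref{lemma:glueing}, replacing the Eisenstein argument --- which relied on a least positive element in the value group of $\mathsf w$ --- by an irreducibility criterion that is available when the residue field of $W$ is finite. Write $f = \prod_{i=1}^n(X-a_i) = \sum_{i=0}^n b_i X^i$, so that $b_n = 1$ and $b_0,\ldots,b_{n-1}\in V$, and let $\mathsf w$ be the valuation attached to $W$, with maximal ideal $N$; by hypothesis $W/N$ is finite. Since a finite field possesses a monic irreducible polynomial of every degree, I would fix a monic $g = \sum_{i=0}^n g_i X^i \in W[X]$ with $g_n = 1$ whose reduction $\overline g$ modulo $N$ is irreducible of degree $n$ in $(W/N)[X]$.

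Next I would apply the Approximation Theorem for independent valuations~\cite[Theorem 22.9]{Gilmer} to $\mathsf v$ and $\mathsf w$ in order to choose, for each $i\in\{0,\ldots,n-1\}$, an element $F_i\in K$ with $\mathsf v(F_i-b_i)\ge n+1$ and $\mathsf w(F_i-g_i)>0$ (equivalently $F_i\equiv g_i\pmod N$); the latter condition is meaningful because $W$ is not a field, its residue field being finite while $K$, the quotient field of $V$, is infinite, so that $\mathsf w(K^\times)$ has positive elements. Set $F = X^n + \sum_{i=0}^{n-1} F_i X^i$. Because $\mathsf v(F_i-b_i)\ge n+1>0$ and $b_i\in V$, each $F_i$ lies in $V$, so $F\in V[X]$, and plainly $F$ is monic of degree $n$.

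For irreducibility, note first that $\overline F = \overline g$, since $F_i\equiv g_i\pmod N$ for $i<n$ and both polynomials are monic. If $F = ST$ with $S,T\in W[X]$ nonconstant, then the leading coefficients of $S$ and $T$ are units of $W$ (their product being $1$), so $\overline S,\overline T$ have the same positive degrees as $S,T$, and $\overline g = \overline F = \overline S\,\overline T$ is a nontrivial factorization, a contradiction; and a nonunit constant cannot divide the monic polynomial $F$. Hence $F$ is irreducible in $W[X]$, and since $W$ is integrally closed and $F$ is monic, $F$ is irreducible in $K[X]$ by~\cite[Chapter 5, § 1.3, Proposition 11]{Bourbaki}, exactly as at the end of the proof of Lemma~\ref{lemma:Eisenstein}. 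The $\mathsf v$-adic claim is then verbatim the corresponding part of the proof of Lemma~\ref{lemma:glueing}: for $a\in R_k$ one has $\mathsf v(F(a)-f(a))\ge\min_i\mathsf v(F_i-b_i)\ge n+1$, whereas $\mathsf v(f(r_k)) = \sum_i\mathsf v(r_k-a_i)\le n$; using $\mathsf v(r_k-a_i)\in\{0,1\}$ and $\mathsf v(a-r_k)\ge 1$ one checks $\mathsf v(r_k-a_i)\le\mathsf v(a-a_i)$ for each $i$, so $\mathsf v(f(r_k)) = \min\{\mathsf v(f(a))\mid a\in R_k\}$; consequently $\mathsf v(F(r_k)) = \mathsf v(f(r_k))$ and $\mathsf v(F(a))\ge\mathsf v(f(r_k)) = \mathsf v(F(r_k))$ for all $a\in R_k$, which yields the asserted chain of equalities.

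The only genuinely new ingredient relative to Lemma~\ref{lemma:glueing} is the irreducibility step: there is no least positive value here to feed an Eisenstein criterion, so I replace it by the combination of the facts that a finite field has a monic irreducible polynomial of every degree and that a monic polynomial over a local integrally closed domain is irreducible over the quotient field as soon as its reduction modulo the maximal ideal is irreducible. The remaining work --- the simultaneous approximation producing $F$, the check that $F\in V[X]$, and the valuation estimate --- is a routine transcription of the proof of Lemma~\ref{lemma:glueing}, the only incidental remarks needed being that $W$ is not a field and that a monic factorization over $K$ descends to $W[X]$.
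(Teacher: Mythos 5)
Your proposal is correct and takes essentially the same approach as the paper: choose a monic degree-$n$ polynomial irreducible over the finite residue field of $W$, use the approximation theorem to build a monic $F$ that is $\mathsf v$-close to $f$ and $\mathsf w$-close to $g$, deduce irreducibility of $F$ over $K$ from irreducibility of $\overline F = \overline g$ over $W/N$ (using monicity and integral closedness of $W$), and finish with the same $\mathsf v$-adic estimate as in Lemma~\ref{lemma:glueing}.
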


\begin{proof}
Let $f = \prod_{i=1}^n (X-a_i) = \sum_{i = 0}^n b_i X^i$. Let $\mathsf{w}$ be a valuation on $K$, independent of $V$, with finite residue field, valuation ring $W$, $P$ the maximal ideal of $W$ and $R = V \cap W$. We construct a monic polynomial $F = X^n + \sum_{i = 0}^{n-1} F_i X^i \in R[X]$ that is irreducible in $K[X]$ and satisfies $\mathsf{v}(b_i - F_i) > n$ for $i \in \{0,\ldots,n-1\}$ and then, we show that this $F$ has the required properties.

It is well-known that there exist irreducible polynomials of every degree over a finite field. In particular, we can choose $g = X^n + \sum_{i = 0}^{n-1} g_i X^i \in W[X]$ a monic polynomial of degree $n$ that is irreducible in $(W/P)[X]$. By the Approximation Theorem for independent valuations~\cite[Theorem 22.9]{Gilmer}, there exist $F_0,\ldots,F_{n-1} \in K$ such that $\mathsf{v}(b_i - F_i) > n$ and $\mathsf{w}(g_i - F_i) > 0$. Let $F = X^n + \sum_{i = 0}^{n-1} F_i X^i$. Then $F \in R[X]$ and $F$ is irreducible in $(W/P)[X]$, because its reduction $\overline{F}$ modulo $P$ is the same as that of $g$. 

We first show that $F$ is irreducible in $W[X]$. Let $F = ST$ where $S,T \in W[X]$. Then $\overline{S} \cdot \overline{T} = \overline{F}$. Since $\overline{F}$ is irreducible, it follows that either $\overline{S}$ or $\overline{T}$ is a unit in $(W/P)[X]$. Since $F$ is monic, it follows that either $S$ or $T$ is in fact a unit in $W[X]$, whence $F$ is irreducible in $W[X]$. $W$ is integrally closed and, therefore, $F$ is also irreducible in $K[X]$~\cite[Chapter 5, § 1.3, Proposition 11]{Bourbaki}.

Now, for each $k \in \{1,\ldots,q\}$,
\[ \mathsf{v}(f(r_k) - F(r_k)) = \mathsf{v}(\sum_{i = 0}^{n-1} (b_i - F_i) r_k^i) > n \geq \mathsf{v}(f(r_k)) \geq \min\{\mathsf{v}(f(r_k)),\mathsf{v}(F(r_k))\}. \]
It follows that $\mathsf{v}(f(r_k)) = \mathsf{v}(F(r_k))$. It remains to prove that $\min\{ \mathsf v(F(a)) \mid a\in R_k\}=\mathsf v(F(r_k))$. So let $b \in R_k$ such that $\mathsf{v}(F(b)) = \min\{ \mathsf v(F(a)) \mid a\in R_k\}$. Then
\[ \mathsf{v}(f(b) - F(b)) = \mathsf{v}(\sum_{i = 0}^{n-1} (b_i - F_i) b^i) > n \geq \mathsf{v}(f(r_k)) = \mathsf{v}(F(r_k)) \geq \mathsf{v}(F(b)),\]
so $\mathsf{v}(F(r_k)) = \mathsf{v}(f(r_k)) \leq \mathsf{v}(f(b)) = \mathsf{v}(F(b))$.
\end{proof}

\section{Combinatorial toolbox}

\begin{notation}\label{notation:interval}
Let $n$ be a positive integer. We write $[n] = \{1,\ldots,n\}$.
\end{notation}

\begin{notation}\label{notation:hyperplanes}
Let $1 < k$, $1 < n_1 \leq \ldots \leq n_k$ be integers, $i,j \in \{1, \ldots,k\}$ with $i < j$, $S \subseteq [n_i]$, and $T \subseteq [n_j]$. We set 
\[ H_{i,j}(S,T) = H_{j,i}(T,S) = [n_1] \times \ldots \times [n_{i-1}] \times S \times [n_{i+1}] \times \ldots \times [n_{j-1}] \times T \times [n_{j+1}] \times \ldots \times [n_k].\]

For $s \in [n_i]$, we define $H_{i,j}(s,T)=H_{i,j}(\{s\},T)$. Moreover, we write $H_{i,j}(S,[n_j]) = H_i(S)$. \\

Note that the $H_i(s)$ are $(k-1)$-dimensional hyperplanes in the grid $[n_1] \times \ldots \times [n_k]$. Analogously, the $H_{i,j}(s,t)$ are $(k-2)$-dimensional subspaces.
\end{notation}

\begin{lemma}\label{lemma:hyperplanes}
Let $k >2$ and $1< n_1 \leq \ldots \leq n_k$ be integers. Let $I \subseteq [n_1] \times \ldots \times [n_k]$. Assume that for every $i \in \{1,\ldots,k\}$ and $r \in [n_i]$ there exists $j \in \{1,\ldots,k\}\setminus \{i\}$ and $T \subseteq [n_j]$ such that $I \cap H_i(r) = H_{i,j}(r,T)$. In other words, every intersection of $I$ with a $(k-1)$-dimensional hyperplane is the union of $(k-2)$-dimensional parallel hyperplanes.

Then there exists $\ell \in \{1, \ldots, k\}$ and $S \subseteq [n_\ell]$ such that $I = H_\ell(S)$. That is, $I$ is the union of $(k-1)$-dimensional parallel hyperplanes.
\end{lemma}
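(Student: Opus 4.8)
The plan is to view $I$ through its membership function and reduce everything to a statement about one coordinate. It suffices to find a coordinate $\ell\in\{1,\dots,k\}$ such that every hyperplane slice $I\cap H_\ell(r)$, $r\in[n_\ell]$, is \emph{trivial}, meaning equal to $\emptyset$ or to all of $H_\ell(r)$; for then $I=H_\ell(S)$ with $S=\{r\in[n_\ell]\mid I\cap H_\ell(r)=H_\ell(r)\}$. First I would record an easy preliminary: if a slice $I\cap H_i(r)=H_{i,j}(r,T)$ is not trivial, then $T$ is a proper nonempty subset of $[n_j]$, and — here one uses $k\geq 3$, so that a slice is a grid in at least two directions — the index $j$, which I call the \emph{direction} of the slice, is uniquely determined (two points of the slice differing only in coordinate $j$ distinguish $j$ from any other candidate).

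Next I would prove the key lemma: for a fixed coordinate $i$, all non-trivial slices $I\cap H_i(r)$ (as $r$ ranges over $[n_i]$) have the same direction, which I denote $D(i)$. Suppose not: say $I\cap H_i(r)=H_{i,j}(r,T)$ and $I\cap H_i(r')=H_{i,j'}(r',T')$ with $r\neq r'$ and $j\neq j'$, with $T,T'$ proper nonempty. Fixing $\alpha\in T$, I would examine the hyperplane slice $I\cap H_j(\alpha)=H_{j,m}(\alpha,U)$. On one hand it contains $H_{i,j}(r,\alpha)$, hence contains points with $x_i=r$ and \emph{every} value of the coordinate $x_{j'}$. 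On the other hand, using the $r'$-slice together with a value of $x_{j'}$ inside $T'$ versus one outside $T'$, the slice $I\cap H_j(\alpha)$ contains some point with $x_i=r'$ but excludes a point differing from it only in coordinate $x_{j'}$; since membership in $H_{j,m}(\alpha,U)$ depends only on coordinate $x_m$, this forces $m=j'$, and then the "every value of $x_{j'}$" observation forces $U=[n_{j'}]$, i.e. $I\cap H_j(\alpha)=H_j(\alpha)$ — contradicting the exclusion. I expect this lemma to be the main obstacle: it is precisely where the hypothesis that each slice is a union of parallel hyperplanes must be played off against itself across two different slices.

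Finally I would run the main argument, where I may assume $I$ is nonempty and not the whole grid (those cases give $S=\emptyset$ or $S=[n_\ell]$ at once); then the membership function of $I$ depends on at least one coordinate, where I say it \emph{depends} on coordinate $m$ if some two points differing only in coordinate $m$ lie one inside and one outside $I$. Suppose, for contradiction, that no coordinate has all its slices trivial, so $D(i)$ is defined for every $i$. For each coordinate $m$ on which the membership function depends and each $i\neq m$, the slice $I\cap H_i(r)$ through the common $i$-coordinate $r$ of the two witnessing points is non-trivial with direction $m$, whence $D(i)=m$. If the membership function depended on two coordinates $m\neq m'$, choosing $i\notin\{m,m'\}$ — possible precisely because $k>2$ — would yield $m=D(i)=m'$, absurd; so it depends on exactly one coordinate $x_m$. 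But then the non-trivial slice $I\cap H_m(r)$ has a direction $D(m)\neq m$, exhibiting dependence of the membership function on $x_{D(m)}$ as well, a contradiction. Hence some coordinate $\ell$ has only trivial slices, and $I=H_\ell(S)$ for the corresponding $S$, as desired.
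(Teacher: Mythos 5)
Your proof is correct, and it takes a genuinely different route from the paper's. The paper picks a nonempty slice $H_1(s)\cap I = H_{1,j}(s,T_j)$ and splits into two cases — whether $T_j = [n_j]$ or not — and in each case directly builds up the set $S$ and shows $I = H_\ell(S)$ (with $\ell=1$ in the first case, $\ell=j$ in the second). You instead introduce a well-defined \emph{direction function} $D(i)$ on coordinates with non-trivial slices, prove as your key lemma that $D(i)$ is independent of $r$ (which is exactly where the hypothesis gets played off against itself across two slices, as you say, and also where $k>2$ is needed for the uniqueness of direction), and then run a clean \emph{dependence} argument: if $D$ were everywhere defined, $\chi_I$ would depend on exactly one coordinate $m$, yet the non-trivial slice perpendicular to $m$ witnesses dependence on $D(m)\neq m$ — contradiction. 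Your argument is somewhat more structural and arguably cleaner to state: it isolates the one lemma where the combinatorics really happens and reduces the rest to a short bookkeeping argument, whereas the paper's case split is more hands-on. Both are elementary and correct; the two proofs buy roughly the same thing, but your organization makes the role of $k>2$ (uniqueness of direction, and the ability to pick $i\notin\{m,m'\}$) especially transparent.
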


\begin{proof}
If $I = \emptyset$ then the statement is trivial. Assume that $I \neq \emptyset$. Let $s \in [n_1]$ such that $H_1(s) \cap I \neq \emptyset$. By the hypothesis of the lemma there exists $j \in \{2, \ldots,k\}$ and $T_j \subseteq [n_j]$ such that $H_1(s) \cap I = H_{1,j}(s,T_j)$.

\textbf{Case 1.} $T_j = [n_j]$. If we can prove for every $m \in [n_1]$ with $H_1(m) \cap I \neq \emptyset$  that $H_1(m) \subseteq I$, we are done by setting $\ell =1 $ and $S = \{m \in [n_1] \mid H_1(m) \cap I \neq \emptyset \}$. So let $m \in [n_1]$ with $H_1(m) \cap I \neq \emptyset$ and $(m,m_2,\ldots,m_k) \in H_1(m) \cap I$. For $i \in \{2,\ldots,k\}$, we obtain
\begin{align*}
H_{1,i}(s,m_i) \cap I = H_i(m_i) \cap H_1(s) \cap I = H_i(m_i) \cap H_1(s) = H_{1,i}(s,m_i),
\end{align*}
where the second equality follows from $T_j=[n_j]$. Hence, $ H_{i,1}(m_i,s) = H_{1,i}(s,m_i) \subseteq I$ for every $i\in \{ 2, \ldots ,k \}$. 

For fixed $i \in \{2, \ldots,k\}$, by the hypothesis of the lemma, there exist $j_i \in \{1,\ldots,k\} \setminus \{i\}$ and $T_i \subseteq [n_{j_i}]$ such that $I \cap H_i(m_i) = H_{i,j_i}(m_i,T_i)$. Now $H_{i,1}(m_i,s) \subseteq I \cap H_i(m_i) = H_{i,j_i}(m_i,T_i)$, and therefore either $T = [n_i]$ or $j_i =1$, and in both cases $m \in T_i$. So, also in both cases, $I \cap H_i(m_i) = H_{i,j_i}(m_i,T_i) \supseteq H_{i,1}(m_i,m) = H_{1,i}(m,m_i)$ for every $i \in \{2,\ldots,k\}$. 

Again, by the hypothesis of the lemma, there exists $j \in \{2,\ldots,k\}$ and $L \subseteq [n_j]$ such that $H_1(m) \cap I = H_{1,j}(m,L)$. Choose $i \in \{2,\ldots,k\}\setminus \{j\}$. Then $H_{1,i}(m,m_i) \subseteq H_1(m) \cap I = H_{1,j}(m,L)$. It follows that $L = [n_j]$ and hence $H_1(m) \cap I = H_1(m)$.

\textbf{Case 2.} $T_j \neq [n_j]$. Let $i \in \{1,\ldots,k\} \setminus \{1,j\}$. Choose $x \in [n_i]$ arbitrary. Then $H_i(x) \cap I \neq \emptyset$. By the hypothesis of the lemma, there exist $j' \in \{1,\ldots,k\} \setminus \{i\}$ and $L \subseteq [n_{j'}]$ such that $H_i(x) \cap I = H_{i,j'}(x,L)$. Clearly, $H_1(s) \cap H_{i,j'}(x,L) \subseteq I \cap H_1(s) = H_{1,j}(s,T_j)$. Since $j \notin \{1,i\}$ and $T_j \neq [n_j]$, it follows that $j' = j$ and $L \subseteq T_j$ (for otherwise, $H_1(s) \cap H_{i,j'}(x,L) \subseteq  H_{1,j}(s,T_j)$  would contain an element whose $j$-th coordinate is in $[n_j] \setminus T_j$). Hence $H_i(x) \cap I = H_{i,j}(x,T_j)$.

Since $x \in [n_i]$ was chosen arbitrary, we obtain 
\begin{align*}
I = \bigcup_{x \in [n_i]} (H_i(x) \cap I) = \bigcup_{x \in [n_i]} H_{i,j}(x,T_j) = H_j(T_j)
\end{align*}
and we are done choosing $\ell = j$ and $S = T_j$.
\end{proof}

\begin{notation}\label{notation:tensors}
Let $k >1$ and $1< n_1 \leq \ldots \leq n_k$ be integers. By $\mathbb{Q}^{n_1\times \ldots \times n_k}$ we denote the set of all $(n_1 \times \ldots \times n_k)$--arrays, that is, the $k$-dimensional analogues of matrices over $\Q$. Let $M \in \mathbb{Q}^{n_1\times \ldots \times n_k}$. For $i \in [n_1] \times \ldots \times [n_k]$, we write $M_i$ for the entry of $M$ indexed by $i$.

For $I \subseteq [n_1] \times \ldots \times [n_k]$, let $Z_I = \{M \in \mathbb{Q}^{n_1\times \ldots \times n_k} \mid \sum_{i \in I} M_i = 0\}$. Moreover,
\[ Z := \bigcap_{\substack{\ell \in \{1,\ldots k\} \\ r \in [n_\ell]}} Z_{H_\ell(r)}.\]

For instance, if $k = 2$ then $Z$ is the set of all $(n_1 \times n_2)$--matrices with all row and column sums equal to $0$.
\end{notation}

Since the elements of $Z$ are defined by the property that sums over hyperplanes are $0$, clearly, sums over disjoint unions of hyperplanes are also $0$. The next lemma shows that no other sum of a subset of the entries of $M \in Z$ is necessarily $0$. We will use it to show the existence of an array $M \in \mathbb{Q}^{n_1\times \ldots \times n_k}$ such that the sum over a subset of the entries of $M$ is $0$ if and only if the corresponding index set is a disjoint union of hyperplanes.

\begin{lemma}\label{lemma:onesum}
Let $k > 1$ and $1 < n_1 \leq \ldots \leq n_k$ be integers. Let $I \subseteq [n_1] \times \ldots \times [n_k]$ be non-empty such that $I \neq H_\ell(S)$ for all $\ell \in \{1,\ldots,k\}$ and $S \subseteq [n_\ell]$.

Then $Z \setminus Z_I \neq \emptyset$.
\end{lemma}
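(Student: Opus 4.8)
The plan is to recast the assertion $Z \setminus Z_I \neq \emptyset$ as a statement about linear functionals, and then to settle that statement by an elementary induction on $k$. Recall that each $Z_A$ is the kernel of the linear functional $\phi_A\colon M \mapsto \sum_{i \in A} M_i$ on $\mathbb{Q}^{n_1 \times \cdots \times n_k}$, and that $Z = \bigcap_{\ell \in [k],\, r \in [n_\ell]} \ker \phi_{H_\ell(r)}$. Suppose, for contradiction, that $Z \setminus Z_I = \emptyset$, i.e.\ that $\phi_I$ vanishes on the intersection $Z$ of the kernels of the $\phi_{H_\ell(r)}$. By the standard fact that a linear functional vanishing on a finite intersection of kernels lies in the span of the corresponding functionals, there are $c_{\ell, r} \in \mathbb{Q}$ with $\phi_I = \sum_{\ell, r} c_{\ell, r}\, \phi_{H_\ell(r)}$. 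Identifying $\phi_A$ with the array $\mathbf{1}_A$ via the standard inner product and evaluating this relation at an index $i = (i_1, \ldots, i_k)$---using $\mathbf{1}_{H_\ell(r)}(i) = 1 \iff i_\ell = r$---it becomes
\[ \mathbf{1}_I(i) \;=\; \sum_{\ell = 1}^{k} f_\ell(i_\ell), \qquad\text{where } f_\ell(r) := c_{\ell, r}. \]
It therefore suffices to show that such a \emph{separable} representation of the $0/1$-valued array $\mathbf{1}_I$ forces $I = H_\ell(S)$ for some $\ell \in [k]$ and $S \subseteq [n_\ell]$, contradicting the hypothesis on $I$ and hence yielding $Z \setminus Z_I \neq \emptyset$.

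To prove the separability claim I would induct on $k \geq 2$. The representation $\sum_\ell f_\ell$ is not unique: adding a constant to one $f_\ell$ and subtracting it from another leaves the sum unchanged, so we may normalize it so that $\min_r f_\ell(r) = 0$, hence $f_\ell \geq 0$, for every $\ell \in \{2, \ldots, k\}$. Fixing $r_\ell^\ast$ with $f_\ell(r_\ell^\ast) = 0$ for $\ell \geq 2$ gives $f_1(r) = \mathbf{1}_I(r, r_2^\ast, \ldots, r_k^\ast) \in \{0,1\}$ for all $r$, so $f_1$ is $0/1$-valued. If $f_1(a) = 1$ for some $a$, then $\mathbf{1}_I(a, i_2, \ldots, i_k) = 1 + \sum_{\ell \geq 2} f_\ell(i_\ell) \in \{0,1\}$ together with $\sum_{\ell \geq 2} f_\ell(i_\ell) \geq 0$ forces $f_\ell \equiv 0$ for all $\ell \geq 2$; then $\mathbf{1}_I(i) = f_1(i_1)$, so $I = H_1(\{r : f_1(r) = 1\})$. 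If instead $f_1 \equiv 0$, then $\mathbf{1}_I$ is constant along its first coordinate. For $k = 2$ this means $\mathbf{1}_I(i) = f_2(i_2)$, which is $0/1$-valued, whence $I = H_2(\{r : f_2(r) = 1\})$. For $k \geq 3$, $\mathbf{1}_I$ descends to a $0/1$-valued array on the grid $[n_2] \times \cdots \times [n_k]$ carrying the separable representation $\sum_{\ell = 2}^{k} f_\ell$, so by the induction hypothesis its support is $H_j(T)$ for some $j \in \{2, \ldots, k\}$ and $T \subseteq [n_j]$, and therefore $I = H_j(T)$ in the original grid. This completes the induction.

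The argument is conceptual rather than computational, so there is no single hard calculation; the step that needs the most care is the opening reduction, which rests on the ``functional vanishing on an intersection of kernels lies in the span of those functionals'' fact. The only routine nuisances are the normalization freedom exploited above and the degenerate instances $I = \emptyset$ and $I = [n_1] \times \cdots \times [n_k]$, which are of the permitted form $H_\ell(S)$ (with $S = \emptyset$, respectively $S = [n_\ell]$) and are correctly produced by the two cases. As an alternative to the self-contained induction of the previous paragraph, one may finish via Lemma~\ref{lemma:hyperplanes}: each slice $I \cap H_i(r)$ inherits a separable representation on a $(k-1)$-grid (the constant $f_i(r)$ being absorbed into one summand), hence is a union of parallel $(k-2)$-hyperplanes by the induction hypothesis, and Lemma~\ref{lemma:hyperplanes} (with the case $k = 2$ treated directly) then forces $I$ itself to be a union of parallel $(k-1)$-hyperplanes, i.e.\ $I = H_\ell(S)$.
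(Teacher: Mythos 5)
Your proof is correct, and it takes a genuinely different route from the paper. The paper's argument inducts on $k$ by producing an explicit witness in $Z \setminus Z_I$: for $k=2$ it builds a $2\times 2$ pattern of $\pm 1$'s sitting on four carefully chosen positions (two in $I$, two outside, after first stripping away any rows wholly contained in $I$), so that the $I$-sum is nonzero while all row and column sums vanish; for $k>2$ it invokes the contrapositive of Lemma~\ref{lemma:hyperplanes} to locate a hyperplane slice $H_i(r)$ on which $I$ is still ``bad,'' applies the induction hypothesis there, and pads the resulting $(k-1)$-dimensional array with zeros. Your argument instead dualizes: $Z\subseteq Z_I$ means $\phi_I$ annihilates the common kernel of the $\phi_{H_\ell(r)}$, so by the standard fact that a functional vanishing on a finite intersection of kernels lies in their span, $\mathbf{1}_I$ has a separable representation $\sum_\ell f_\ell(i_\ell)$; the positivity normalization and the $0/1$ constraint then force each $f_\ell$ except possibly one to vanish identically, which is exactly $I = H_\ell(S)$. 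This recasting is clean and conceptually explains the mechanism (the only linear constraints $Z$ sees are ``separable'' ones), at the price of invoking the kernel-span fact; the paper's proof is more elementary and hands-on, producing an explicit array, and it reuses Lemma~\ref{lemma:hyperplanes} which is already in the toolbox. Your alternative finish via Lemma~\ref{lemma:hyperplanes} (slicing the separable representation) is also sound and is structurally closest to the paper's inductive step. One minor point worth making explicit in a final write-up: in Case 1, from $\sum_{\ell\geq 2} f_\ell(i_\ell)=0$ for \emph{all} $(i_2,\ldots,i_k)$ together with $f_\ell\geq 0$, one concludes $f_\ell\equiv 0$ for each $\ell\geq 2$ by plugging in $i_m=r_m^\ast$ for $m\neq\ell$; you state the conclusion but not this one-line reason.
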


\begin{proof}
We do an induction on $k$. If $k=2$, we deal with $(n_1 \times n_2)$--matrices and we have to show that there exists a matrix $M \in \Q^{n_1 \times n_2}$ all whose row and column sums are $0$ and such that $\sum_{i \in I} M_i \neq 0$.

Let $I' = I \setminus \bigcup_{H_1(s) \subseteq I} H_1(s)$. Note that $I'$ results from $I$ by removing all rows fully contained in $I$. It suffices to show the assertion for $I'$, so assume without loss of generality that $I = I'$. Now, note that there exists $(i_1,j_1) \in I$ such that neither the $i_1$-th row nor the $j_1$-th column is contained in $I$. Let $i_2 \in [n_1]$ such that $(i_2,j_1) \notin I$. In the same way, let $j_2 \in [n_2]$ such that $(i_1,j_2) \notin I$. Define the matrix $M \in Z$ via
\begin{align*}
M_i = \begin{cases}
1 \text{ if } i \in \{(i_1,j_1), (i_2,j_2)\}, \\
-1 \text{ if } i \in \{(i_1,j_2), (i_2,j_1)\}, \\
0 \text{ otherwise}.
\end{cases}
\end{align*}
Then $\sum_{i \in I} M_i \in \{1,2\}$, hence $M \in Z \setminus Z_I$.

Now, let $k > 2$. Let $i \in \{1, \ldots,k\}$ and $r \in [n_i]$ such that for all $j \in \{1,\ldots,k\}$ and $T \subseteq [n_j]$, the intersection of $I$ and $ H_i(r)$ is not equal to $H_{i,j}(r,T)$. Such $i$ and $r$ exist by Lemma~\ref{lemma:hyperplanes}. Set $J = I \cap H_i(r)$. By the induction hypothesis, we find a $(k-1)$-dimensional array $N$ indexed by elements of $H_i(r)$ such that $\sum_{d \in H_{i,j}(r,s)} N_d = 0$ for all $j \in \{1,\ldots,k\}\setminus \{i\}$ and $s \in [n_j]$, and $\sum_{d \in J} N_d \neq 0$. We define
\begin{align*}
M_d = \begin{cases}
N_d \text{ if } d \in H_i(r), \\
0 \text{ otherwise}.
\end{cases}
\end{align*}
Then $M \in Z \setminus Z_I$.
\end{proof}

We use the following fact from linear algebra.

\begin{fact}\label{fact:subspaces}
Let $K$ be an infinite field and $V$ a $K$-vector space. Let $W,W_1,\ldots,W_n$ be linear subspaces of $V$ such that $W \subseteq \bigcup_{j=1}^n W_j$.

Then there exists $j \in \{1,\ldots,n\}$ such that $W \subseteq W_j$.
\end{fact}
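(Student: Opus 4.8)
The statement is the classical "a vector space over an infinite field is not a finite union of proper subspaces," phrased relatively to the subspace $W$. The plan is to argue by induction on $n$, or alternatively to give a direct argument using the infinitude of $K$ to produce a single vector of $W$ that forces containment. I would first dispose of the trivial cases: if $W = \{0\}$ or $n = 1$ the claim is immediate. For the inductive step, suppose $W \subseteq \bigcup_{j=1}^n W_j$ and that no single $W_j$ contains $W$; I want to derive a contradiction. After discarding any $W_j$ with $W \cap W_j = W \cap W_{j'}$ redundancies, pick a vector $w \in W$ that lies in $W_n$ but (by the failure of $W \subseteq W_1 \cup \cdots \cup W_{n-1}$, which holds by the inductive hypothesis applied to $W$ and $W_1,\dots,W_{n-1}$) choose also a vector $u \in W$ with $u \notin W_n$. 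Then $u \notin W_n$, and I consider the infinite family of vectors $u + t w$ for $t \in K$.

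The key step is a pigeonhole argument on this family. Each $u + tw$ lies in some $W_j$; since $K$ is infinite and there are only $n$ subspaces, some fixed index $j_0$ catches two distinct parameters, say $u + t_1 w, u + t_2 w \in W_{j_0}$ with $t_1 \neq t_2$. Subtracting, $(t_1 - t_2) w \in W_{j_0}$, hence $w \in W_{j_0}$, and then $u = (u + t_1 w) - t_1 w \in W_{j_0}$ as well. If $j_0 = n$ this contradicts $u \notin W_n$; if $j_0 < n$, then actually every $u + tw$ with $t \neq t_1$... more carefully, I should instead choose $u$ to lie outside \emph{all} of $W_1,\dots,W_{n-1}$ is too strong, so the cleaner route is: take $u \in W \setminus W_n$ (exists since $W \not\subseteq W_n$) and $w \in W \setminus (W_1 \cup \cdots \cup W_{n-1})$ (exists by the inductive hypothesis, since $W \not\subseteq W_j$ for each $j < n$ would need care — so instead apply induction to get that $W$ is not contained in the union $W_1 \cup \cdots \cup W_{n-1}$, using that it is not contained in any one of them). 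Then for each $t$, $u + tw \notin W_n$ would follow if $w \notin W_n$; handling the case $w \in W_n$ separately (then swap roles), I get that $u + tw \in W_1 \cup \cdots \cup W_{n-1}$ for all but at most one $t$, and the pigeonhole on the infinitely many such $t$ among $n-1$ subspaces yields $w$ and $u$ both in a common $W_{j_0}$ with $j_0 < n$, contradicting $w \notin W_{j_0}$.

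The main obstacle, and the only place requiring genuine care, is bookkeeping the case distinctions so that the two auxiliary vectors $u$ and $w$ genuinely exist: one must invoke the inductive hypothesis in the contrapositive form "$W \not\subseteq \bigcup_{j<n} W_j$" which is licensed precisely because $W \not\subseteq W_j$ for each individual $j$, and one must treat symmetrically whether $w \in W_n$ or not. Everything else — the line $\{u + tw : t \in K\}$ meeting some $W_{j_0}$ twice, and the subtraction trick forcing $w, u \in W_{j_0}$ — is a two-line computation using only that $K$ has more than $n$ elements, which holds since $K$ is infinite. I expect the whole proof to be under ten lines once the case analysis is set up cleanly.
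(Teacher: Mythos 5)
Your argument is essentially sound and reconstructs the standard inductive proof that a vector space over a field with more than $n$ elements cannot be a union of $n$ proper subspaces, phrased relative to $W$. The paper itself does not prove this fact but only cites a reference (Roman, Theorem 1.2), so your proof is a genuine replacement rather than a reproduction of the paper's argument.

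One caveat on the bookkeeping at the end: the case distinction on whether $w \in W_n$ or $w \notin W_n$ is unnecessary, and the sentence ``for each $t$, $u + tw \notin W_n$ would follow if $w \notin W_n$'' has the implication reversed. From $u \notin W_n$ and $w \notin W_n$ one cannot conclude $u + tw \notin W_n$; for example, with $W_n$ spanned by $(1,1)$ in $K^2$, $u = (1,0)$, $w = (0,1)$, one has $u + w \in W_n$. The correct and only relevant statement is the converse: if $w \in W_n$ and $u \notin W_n$, then $u + tw \notin W_n$ for every $t$, because $tw \in W_n$. Moreover, in your set-up the case $w \notin W_n$ simply cannot occur: you chose $w \in W \setminus (W_1 \cup \cdots \cup W_{n-1})$ using the inductive hypothesis (licensed by $W \not\subseteq W_j$ for each $j < n$), and since $W \subseteq \bigcup_{j=1}^n W_j$, this forces $w \in W_n$. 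With that observation the argument closes cleanly and the ``swap roles'' step can be dropped: every $u + tw$ lies in $W \setminus W_n \subseteq W_1 \cup \cdots \cup W_{n-1}$, infinitely many values of $t$ and only $n-1$ subspaces give distinct $t_1, t_2$ with $u + t_1 w,\ u + t_2 w \in W_{j_0}$ for some $j_0 < n$, whence $(t_1 - t_2)w \in W_{j_0}$ and thus $w \in W_{j_0}$, contradicting the choice of $w$. So the proof is correct once the spurious case split is removed.
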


\begin{proof}
This is a known fact from linear algebra~\cite[Theorem 1.2]{roman}.
\end{proof}

\begin{proposition}\label{proposition:existence}
Let $k$ be a positive integer and $1 < n_1 \leq \ldots \leq n_k$ integers. Then there exists $M \in Z$ such that $M \in Z_I$ only if $I$ is a disjoint union of hyperplanes, that is, only if $I = H_\ell(S)$ for some $\ell \in \{1,\ldots,k\}$ and $S \subseteq [n_\ell]$.
\end{proposition}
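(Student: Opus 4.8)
The plan is to realize the desired array $M$ as a generic element of the rational vector space $Z$, and to use Fact~\ref{fact:subspaces} to rule out the ``bad'' index sets one at a time. Concretely, for every non-empty $I \subseteq [n_1] \times \ldots \times [n_k]$ that is \emph{not} of the form $H_\ell(S)$, Lemma~\ref{lemma:onesum} produces some $M^{(I)} \in Z \setminus Z_I$; equivalently, $Z \cap Z_I$ is a \emph{proper} linear subspace of $Z$. Since there are only finitely many subsets $I$ of the finite grid $[n_1] \times \ldots \times [n_k]$, the set of ``bad'' index sets is finite, and so is the collection $\{Z \cap Z_I : I \text{ bad}\}$ of proper subspaces of $Z$.

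First I would observe that $Z$ is a $\mathbb{Q}$-vector space and $\mathbb{Q}$ is infinite, so Fact~\ref{fact:subspaces} applies with $V = W = Z$ and the $W_j$ ranging over the finite list of proper subspaces $Z \cap Z_I$ (for $I$ bad). If we had $Z \subseteq \bigcup_{I \text{ bad}} (Z \cap Z_I)$, then Fact~\ref{fact:subspaces} would give a single bad $I$ with $Z \subseteq Z \cap Z_I$, i.e. $Z = Z \cap Z_I$, contradicting $Z \setminus Z_I \neq \emptyset$ from Lemma~\ref{lemma:onesum}. Hence $Z \not\subseteq \bigcup_{I \text{ bad}} (Z \cap Z_I)$, and any $M$ in the complement works: for such an $M$, if $\sum_{i \in I} M_i = 0$ for some non-empty $I$, then $I$ is not bad, hence $I = H_\ell(S)$ for some $\ell$ and $S \subseteq [n_\ell]$; and the empty index set and the case $k = 1$ are trivial (when $k=1$, $Z = \{0\}$ by Notation~\ref{notation:tensors}, or one treats it as vacuous since the statement about hyperplanes degenerates).

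There is essentially no hard step here: the combinatorial content has already been isolated in Lemmas~\ref{lemma:hyperplanes} and~\ref{lemma:onesum}, and Fact~\ref{fact:subspaces} is exactly the tool needed to pass from ``each bad $I$ is avoided by some array'' to ``some array avoids all bad $I$ simultaneously.'' The only minor points to be careful about are: (i) the finiteness of the index set of bad $I$'s, which is immediate since the grid is finite; (ii) correctly handling the degenerate/edge cases $k = 1$ and $I = \emptyset$, which the statement of Lemma~\ref{lemma:onesum} excludes but which must be addressed in the proposition; and (iii) noting that when $k = 1$ the only subsets of $[n_1]$ of the form $H_\ell(S)$ are all of $[n_1]$ and $\emptyset$, so the claim reads correctly there as well. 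Thus the proof is just: collect the finitely many proper subspaces $Z \cap Z_I$ coming from Lemma~\ref{lemma:onesum}, invoke Fact~\ref{fact:subspaces} to find $M \in Z$ outside their union, and conclude.
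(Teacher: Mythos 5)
Your argument is correct and is essentially the same as the paper's: the paper likewise argues by contradiction that if every $M \in Z$ lay in some $Z_I$ with $I$ bad, then $Z$ would be covered by the finitely many proper subspaces $Z_{I}$, and Fact~\ref{fact:subspaces} would force $Z \subseteq Z_I$ for a single bad $I$, contradicting Lemma~\ref{lemma:onesum}. The only (harmless) differences are that you phrase it via $Z \cap Z_I$ rather than $Z_I$ and you spell out the trivial edge cases $k=1$ and $I=\emptyset$, which the paper leaves implicit.
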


\begin{proof}
Note that $Z_I$ is a subspace of $\Q^{n_1 \times \ldots \times n_k}$ for every $I \subseteq [n_1] \times \ldots \times [n_k]$. Assume that the statement of the proposition does not hold. Then there are finitely many non-empty $I_1,\ldots,I_n \subseteq [n_1] \times \ldots \times [n_k]$ each of which is not a union of parallel hyperplanes such that $Z \subseteq \bigcup_{j = 1}^n Z_{I_j}$. It follows by Fact~\ref{fact:subspaces} that $Z \subseteq Z_{I_j}$ for some $j$. This is a contradiction to Lemma~\ref{lemma:onesum}.
\end{proof}

\begin{notation}\label{notation:pairsoftensors}
Let $k >1$ and $1< n_1 \leq \ldots \leq n_k$ be integers. For $I, J \subseteq [n_1] \times \ldots \times [n_k]$, let $Z_{I,J} = \{(M,N) \in (\mathbb{Q}^{n_1\times \ldots \times n_k})^2 \mid \sum_{i \in I} M_i = \sum_{j\in J} N_j\}$. Moreover,
\[ \overline Z := \bigcap_{\substack{\ell \in \{1,\ldots k\} \\ r \in [n_\ell]}} Z_{H_\ell(r), H_\ell(r)}.\]
For $s\in \Q$ we denote by $(s)$ the array all of whose entries are $s$.
\end{notation}

\begin{lemma}\label{lemma:avoidance}
Let $k > 1$ and $1 < n_1 \leq \ldots \leq n_k$ be integers. Let $I,J \subseteq [n_1] \times \ldots \times [n_k]$ be non-empty, and not both the same union of parallel hyperplanes.

Then $\overline Z \setminus Z_{I,J} \neq \emptyset$.
\end{lemma}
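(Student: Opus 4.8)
\textbf{Proof plan for Lemma~\ref{lemma:avoidance}.}

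The plan is to reduce this statement about pairs of arrays to the already-established Proposition~\ref{proposition:existence} about single arrays, by a change of variables. Given $(M,N) \in (\Q^{n_1\times\ldots\times n_k})^2$, consider the single array $P := M - N$. Then $(M,N) \in \overline Z$ forces $\sum_{i\in H_\ell(r)} P_i = \sum_{i\in H_\ell(r)} M_i - \sum_{i\in H_\ell(r)} N_i = 0$ for all hyperplanes $H_\ell(r)$, i.e.\ $P \in Z$; and conversely, given any $P \in Z$ the pair $(P, (0))$ lies in $\overline Z$. So the image of $\overline Z$ under $(M,N)\mapsto M-N$ is exactly $Z$. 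First I would use Proposition~\ref{proposition:existence} to pick $P \in Z$ such that $P \in Z_I$ only if $I$ is a disjoint union of parallel hyperplanes, and then set $(M,N) = (P,(0)) \in \overline Z$.

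Next I would check that this $(M,N)$ actually lies in $\overline Z \setminus Z_{I,J}$ for the given $I,J$. Unwinding definitions, $(P,(0)) \in Z_{I,J}$ means $\sum_{i\in I} P_i = \sum_{j\in J} 0 = 0$, i.e.\ $P \in Z_I$. By the choice of $P$ this can only happen if $I$ is a disjoint union of parallel hyperplanes, say $I = H_\ell(S)$. So the only case not immediately handled is when $I$ itself is a union of parallel hyperplanes. In that case, since $P \in Z$, we automatically have $\sum_{i \in I} P_i = 0$, so this particular $(M,N)$ does \emph{not} witness the claim, and a separate argument is needed.

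To handle that remaining case, suppose $I = H_\ell(S)$ is a disjoint union of parallel hyperplanes. By hypothesis $I$ and $J$ are not \emph{both} the same union of parallel hyperplanes, so either $J$ is not a union of parallel hyperplanes, or $J = H_m(T)$ with $(m,T) \neq (\ell,S)$ in an essential way (i.e.\ $H_m(T) \neq H_\ell(S)$ as subsets). In either subcase I would symmetrically produce a witness: take $Q \in Z$ from Proposition~\ref{proposition:existence} with $Q \in Z_{I'}$ only for $I'$ a disjoint union of parallel hyperplanes, but now assigned to the second coordinate — that is, try $(M,N) = ((0), Q)$, which lies in $\overline Z$ and fails to be in $Z_{I,J}$ precisely when $\sum_{j\in J} Q_j \neq 0$, i.e.\ when $J$ is not a union of parallel hyperplanes. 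The genuinely delicate subcase is when \emph{both} $I$ and $J$ are unions of parallel hyperplanes but distinct ones, say $I = H_\ell(S) \neq H_m(T) = J$; then neither of the two ``one-sided'' witnesses works, and I would instead build $(M,N)$ directly so that $\sum_{i\in I}M_i \neq \sum_{j\in J}N_j$ while preserving all hyperplane-sum constraints — for instance, exploiting that $I \setminus J$ or $J\setminus I$ is nonempty and not itself a union of hyperplanes relative to the ambient structure, and adapting the explicit $\pm 1$ construction from the base case of Lemma~\ref{lemma:onesum}, possibly together with an induction on $k$ mirroring that lemma.

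The main obstacle I anticipate is exactly this last subcase: two distinct unions of parallel hyperplanes $H_\ell(S)$ and $H_m(T)$. When $\ell = m$ but $S \neq T$, one can take $M = N$ supported so as to separate the sums over $S$ and $T$ in the $\ell$-th direction; when $\ell \neq m$, one needs an array pair whose $I$-sum and $J$-sum differ even though $I$ and $J$ overlap heavily, and the cleanest route is probably a dimension reduction: slice along a hyperplane in a direction transverse to both $\ell$ and $m$ (available since $k > 1$ forces enough directions once we are careful about the $k = 2$ case), reduce to lower-dimensional arrays, and invoke the inductive hypothesis — essentially running the same scheme as in Lemma~\ref{lemma:onesum} but tracking two index sets simultaneously. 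I would organize the write-up as: (1) the change-of-variables reduction disposing of the case where $I$ is not a union of parallel hyperplanes; (2) the symmetric argument when $J$ is not a union of parallel hyperplanes; (3) the two-distinct-hyperplane-unions case by explicit construction plus induction on $k$.
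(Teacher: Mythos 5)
Your reduction via $P = M - N$ is where you go astray, and it leads you to a case analysis that is both more complicated than necessary and not fully closed. The mapping $(M,N)\mapsto M-N$ does send $\overline Z$ onto $Z$, but it has a large kernel inside $\overline Z$: every pair $(M,M)$ with $M \in \Q^{n_1\times\ldots\times n_k}$ \emph{arbitrary} lies in $\overline Z$, because the defining constraints of $\overline Z$ only require $\sum_{i\in H_\ell(r)} M_i = \sum_{i\in H_\ell(r)} N_i$, not that either side vanish. By restricting attention to witnesses of the form $(P,(0))$ and $((0),Q)$ you throw away exactly this freedom, which is what makes the remaining case hard for you.

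The paper's proof splits on $I\neq J$ versus $I=J$, not on whether $I$ or $J$ is a hyperplane union. If $I\neq J$, take (after swapping $I$ and $J$ if necessary) some $i_0 \in I\setminus J$ and let $M$ be the Kronecker array $M_j=\delta_{i_0,j}$; then $(M,M)\in\overline Z$ automatically, while $\sum_{i\in I}M_i = 1 \neq 0 = \sum_{j\in J}M_j$, so $(M,M)\notin Z_{I,J}$. This disposes in one line of the entire sub-case you flag as ``genuinely delicate'' (two distinct unions of parallel hyperplanes, possibly in different directions) — no induction, no dimension reduction, no $\pm 1$ construction. The only remaining case is $I = J$ not a union of parallel hyperplanes, which you handle correctly via Proposition~\ref{proposition:existence} and the pair $(M,(0))$. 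So your proposal contains the right second half, but the first half is an uncompleted sketch of a much harder argument than is needed; the missing idea is simply that $(M,M)\in\overline Z$ for every $M$, which you brush against (``take $M=N$ supported so as to separate the sums'') but do not exploit in full generality.
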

\begin{proof}
First, suppose $I\neq J$ and assume, without loss of generality, that there exists $i\in I\setminus J$. Now let $M$ be such that $M_j=\delta_{i,j}$, where $\delta_{i,j}$ is the Kronecker-$\delta$. Then $(M,M)\in \overline Z\setminus Z_{I,J}$.

Now suppose that $I=J\neq H_\ell(S)$ for all $\ell \in \{1,\ldots,k\}$ and $S \subseteq [n_\ell]$. Then $(M,(0))\in \overline Z\setminus Z_{I,J}$, where $M$ is an array as in Proposition \ref{proposition:existence}.
\end{proof}

\begin{proposition}\label{proposition:existenceofpairs}
Let $k$ be a positive integer and $1 < n_1 \leq \ldots \leq n_k$ integers. Then there exists $(M,N) \in \overline Z$ such that $(M,N) \in Z_{I,J}$ only if $I=J$ is a disjoint union of parallel hyperplanes (that is, only if $I = H_\ell(S)$ for some $\ell \in \{1,\ldots,k\}$ and $S \subseteq [n_\ell]$). Moreover, $M$ and $N$ can be chosen such that all of their entries are positive integers.
\end{proposition}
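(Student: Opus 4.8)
The plan is to follow the same strategy as in the proof of Proposition~\ref{proposition:existence}, but for pairs. First observe that $\overline Z$ and every $Z_{I,J}$ are linear subspaces of $(\Q^{n_1\times\ldots\times n_k})^2$, and that there are only finitely many pairs $(I,J)$ of subsets of $[n_1]\times\ldots\times[n_k]$, hence only finitely many \emph{forbidden} ones, namely those with $I\neq J$ or with $I=J$ not of the form $H_\ell(S)$. If no element of $\overline Z$ avoided all these finitely many forbidden subspaces, then $\overline Z$ would be contained in their union, so by Fact~\ref{fact:subspaces} (the ground field $\Q$ being infinite) it would lie inside a single forbidden $Z_{I,J}$. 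When $I$ and $J$ are both non-empty this contradicts Lemma~\ref{lemma:avoidance}; the remaining cases, where exactly one of $I,J$ is empty, I would dispatch by hand, using that $((1),(1))\in\overline Z$ (since $\sum_{i\in H_\ell(r)}1=|H_\ell(r)|=\sum_{j\in H_\ell(r)}1$) while $\sum_{i\in I}1\neq\sum_{j\in J}1$ whenever exactly one index set is empty, and noting that the pair $I=J=\emptyset$ is not forbidden since $\emptyset=H_\ell(\emptyset)$. This yields a rational pair $(M_0,N_0)\in\overline Z$ such that $(M_0,N_0)\in Z_{I,J}$ only if $I=J$ is a disjoint union of parallel hyperplanes.

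It then remains to upgrade $(M_0,N_0)$ to a pair with positive integer entries. First I would clear denominators: multiplying both arrays by a common denominator of their entries keeps the pair inside the subspace $\overline Z$ and inside, respectively outside, exactly the same subspaces $Z_{I,J}$, so I may assume $M_0,N_0$ have integer entries. Then I would shift by a large constant, replacing $(M_0,N_0)$ by $(M_0+c\,(1),\,N_0+c\,(1))$ for a suitable positive integer $c$; since $((1),(1))\in\overline Z$, this pair is still in $\overline Z$. For an allowed pair $(I,J)$ one has $|I|=|J|$, so the shift does not affect membership in $Z_{I,J}$ (and $(M_0,N_0)$ already lies there, which is harmless). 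For a forbidden pair $(I,J)$ the condition $(M_0+c(1),N_0+c(1))\in Z_{I,J}$ amounts to $c(|I|-|J|)=\sum_{j\in J}(N_0)_j-\sum_{i\in I}(M_0)_i$ with right-hand side fixed and nonzero when $|I|=|J|$; this is satisfied for at most one value of $c$ (for no value when $|I|=|J|$). Choosing $c$ to be a positive integer large enough that all entries of both shifted arrays are positive, while avoiding these finitely many exceptional values, completes the argument.

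I do not expect a conceptual obstacle, since the substantive work is already carried out in Lemma~\ref{lemma:avoidance} and Fact~\ref{fact:subspaces}. The part needing the most care is the constant shift in the second step: one must verify that adding the all-ones array to every component stays within $\overline Z$, that it destroys membership in every forbidden $Z_{I,J}$ except for finitely many explicit values of $c$, and that these can be simultaneously avoided together with the positivity requirement. A secondary bookkeeping point is not to overlook the cases where exactly one of $I,J$ is empty when invoking Fact~\ref{fact:subspaces}, since Lemma~\ref{lemma:avoidance} is only stated for non-empty $I,J$.
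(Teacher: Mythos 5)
Your proposal is correct and follows essentially the same route as the paper: reduce via Fact~\ref{fact:subspaces} and Lemma~\ref{lemma:avoidance} to find a rational pair, then clear denominators and shift by a positive multiple of the all-ones array. The only cosmetic differences are that the paper quantifies the shift by a single bound $\eta$ exceeding $\max_{I,J}|\sum_{i\in I}M_i-\sum_{j\in J}N_j|$ rather than excluding finitely many exceptional values of $c$ (both work, since after either choice $c(|I|-|J|)=\sum_J N-\sum_I M$ forces $|I|=|J|$ and hence equality of the original sums), and that you explicitly dispatch the edge case where exactly one of $I,J$ is empty, which the paper silently ignores but which is in any case ruled out once all entries are positive.
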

\begin{proof}
Note that $Z_{I,J}$ is a subspace of $(\Q^{n_1 \times \ldots \times n_k})^2$ for all $I,J \subseteq [n_1] \times \ldots \times [n_k]$. Assume that the statement of the proposition does not hold. Then there are finitely many pairs $(I_1, J_1),\ldots,(I_n,J_n)$ of non-empty subsets of $[n_1] \times \ldots \times [n_k]$  such that in each case $I_j$ and $J_j$ are not the same union of parallel hyperplanes, and such that $\overline Z \subseteq \bigcup_{j = 1}^n Z_{I_j, J_j}$. It follows by Fact~\ref{fact:subspaces} that $\overline Z \subseteq Z_{I_j, J_j}$ for some $j$. This is a contradiction to Lemma~\ref{lemma:avoidance}.

Moreover, by multiplying with the common denominator of all the entries from $M$ and $N$, we obtain integer entries in both arrays. Now let $\eta$ be a positive integer with
\[\eta>\max\{|\sum_{i \in I} M_i - \sum_{j\in J} N_j|: I,J\subseteq [n_1] \times \ldots \times [n_k]\}.\]

Then we obtain that each entry of $M'=M+(\eta)$ and $N'=N+(\eta)$ is a positive integer. It remains to show that $(M',N')$ has the asserted property. We show for any $I,J$ that $(M,N)\in Z_{I,J}$ if and only if $(M',N')\in Z_{I,J}$. We already know that $(M,N)\in Z_{I,J}$ implies that $I = J$. So the equalities \[\sum_{i \in I} M'_i =(\sum_{i \in I} M_i) + |I|\eta = (\sum_{j\in J} N_j) + |J|\eta= \sum_{j\in J} N'_j \] imply $(M',N')\in Z_{I,J}$. To see the converse, let $I,J$ be such that $(M',N')\in Z_{I,J}$. Since
\[
(\sum_{i \in I} M_i) + |I|\eta =\sum_{i \in I} M'_i = \sum_{j\in J} N'_j = (\sum_{j\in J} N_j) + |J|\eta,
\]
we obtain that
\[
\sum_{i \in I} M_i - \sum_{j\in J} N_j = (|J|-|I|)\eta.
\]
By the choice of $\eta$ and the pair $(M,N)$ this is only possible if $I=J=H_\ell(S)$ for some $\ell \in \{1,\ldots,k\}$ and $S \subseteq [n_\ell]$.
\end{proof}

\section{Sets of lengths of integer-valued polynomials over a DVR}

We use a special case of a lemma from a paper by Nakato, Rissner and the second author \cite[Lemma 3.4]{globalcase}.

\begin{lemma}\label{lemma:old}(\cite[Lemma 3.4]{globalcase} specialized to DVRs)
Let $V$ be a discrete valuation domain with quotient field $K$, normalized valuation $\mathsf v: K^\times \to \Z$, prime element $\pi \in V$ and let $H \in \Int(V)$ be of the following form:
\[ H = \frac{\prod_{i \in I} f_i}{\pi^e} \text{ with } \min\{\mathsf{v}(\prod_{i \in I} f_i(a)) \mid a \in V\} = e, \]
where $e$ is a positive integer and, for each $i \in I$, $f_i \in V[X]$ is irreducible in $K[X]$.

If $H = g_1 \cdots g_\nu$ is a factorization of $H$ into (not necessarily irreducible) non-units of $\Int(V)$ then each $g_j$ is of the form 
\[ g_j = \frac{\prod_{i \in I_j} f_i}{\pi^{e_j}}, \]
where $\emptyset \neq I_j \subseteq I$ and the $e_j$ are non-negative integers such that $I$ is the disjoint union of the $I_j$ and $\sum_{i = 1}^\nu e_i = e$.
\end{lemma}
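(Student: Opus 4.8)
The plan is to pass to $K[X]$, which is a unique factorization domain in which $\pi^{-e}$ is a unit, and then to carry out a bookkeeping of valuations. As a preliminary observation I would record that $\Int(V)^\times = V^\times$: any unit of $\Int(V)$ is a unit of $K[X]$, hence a constant $c \in K^\times$, and $c, c^{-1} \in V$ forces $c \in V^\times$.

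Since $H = \pi^{-e}\prod_{i \in I} f_i$ with the $f_i$ irreducible in $K[X]$ and $\pi^{-e} \in K[X]^\times$, and $H = g_1 \cdots g_\nu$ with the $g_j \in \Int(V) \subseteq K[X]$ nonzero, unique factorization in $K[X]$ yields a decomposition of the index (multi)set $I = \bigsqcup_{j=1}^{\nu} I_j$ together with constants $c_j \in K^\times$ such that $g_j = c_j \prod_{i \in I_j} f_i$ for each $j$. Comparing $\prod_j g_j = H$ with $\pi^{-e}\prod_{i\in I} f_i$ and cancelling the nonzero factor $\prod_{i \in I} f_i$ gives $\prod_{j} c_j = \pi^{-e}$, hence $\sum_{j} \mathsf v(c_j) = -e$.

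Now for each $j$ put $e_j := \min\{\mathsf v(\prod_{i \in I_j} f_i(a)) \mid a \in V\}$; this is a non-negative integer since each $f_i \in V[X]$. From $g_j \in \Int(V)$ we get $\mathsf v(c_j) + \mathsf v(\prod_{i \in I_j} f_i(a)) \geq 0$ for all $a \in V$, hence $\mathsf v(c_j) \geq -e_j$. On the other hand, for each $a \in V$ we have $\mathsf v(\prod_{i \in I} f_i(a)) = \sum_{j} \mathsf v(\prod_{i \in I_j} f_i(a)) \geq \sum_j e_j$, so $e \geq \sum_j e_j$. Combining these, $-e = \sum_j \mathsf v(c_j) \geq -\sum_j e_j \geq -e$, which forces $\mathsf v(c_j) = -e_j$ for every $j$ and $\sum_j e_j = e$. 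Thus $\pi^{e_j} c_j$ is a unit $\varepsilon_j \in V^\times$, and absorbing $\varepsilon_j$ into one of the factors (replacing an $f_i$ by an associate, which is still in $V[X]$ and irreducible over $K$) rewrites $g_j = \pi^{-e_j}\prod_{i \in I_j} f_i$ in the asserted form. Finally, if some $I_j$ were empty then $g_j = c_j$ would be a constant with $\mathsf v(c_j) = -e_j = 0$, i.e.\ a unit of $\Int(V)$, contradicting that all $g_j$ are non-units; hence $I_j \neq \emptyset$ for all $j$.

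The routine parts here are the unit-group computation and the unique-factorization bookkeeping in $K[X]$; the one step that requires care is the valuation count, namely obtaining $e \geq \sum_j e_j$ and then squeezing the chain of inequalities into equalities. One should also note that the minima defining the $e_j$ exist, or simply work with infima throughout, which is harmless since an infimum of a set of non-negative integers is again a non-negative integer.
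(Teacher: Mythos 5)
The paper states this lemma as a specialization of \cite[Lemma 3.4]{globalcase} and does not prove it, so there is no in-paper argument to compare against. Your proof is correct and self-contained. The key steps are all sound: passing to the UFD $K[X]$, in which $\pi^{-e}$ is a unit, yields $g_j=c_j\prod_{i\in I_j}f_i$ with $I=\bigsqcup_j I_j$ and $\prod_j c_j=\pi^{-e}$; setting $e_j=\min_{a\in V}\mathsf{v}(\prod_{i\in I_j}f_i(a))$ (which exists since $V$ is infinite, so the polynomial is nonzero somewhere, and the values are non-negative integers) gives $\mathsf{v}(c_j)\geq -e_j$ from $g_j\in\Int(V)$ and $\sum_j e_j\leq e$ from additivity of $\mathsf{v}$, and the chain $-e=\sum_j\mathsf{v}(c_j)\geq-\sum_j e_j\geq -e$ forces equality throughout. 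Your treatment of the residual unit $\varepsilon_j=\pi^{e_j}c_j\in V^\times$ is the right way to resolve the one genuine ambiguity: since $\prod_j\varepsilon_j=1$, absorbing each $\varepsilon_j$ into one of its $f_i$ leaves the global data unchanged, which amounts to reading the conclusion up to associates -- and that is exactly how the lemma is used later in the paper (only the index sets $I_j$ and exponents $e_j$ matter). Your closing observation that $I_j=\emptyset$ would give $e_j=0$ and hence $\mathsf{v}(c_j)=0$, making $g_j$ a unit, correctly establishes nonemptiness.
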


\begin{lemma}\label{lemma:irreducibles}
Let $V$ be a discrete valuation domain with finite residue field of cardinality $q$. Let $K$ be the quotient field of $V$, $\mathsf v: K^\times \to \Z$ its normalized valuation and $\pi \in V$ a prime element, that is, $\mathsf v(\pi) =1$. By $R_1,\ldots,R_q$ denote the residue classes of $V$ modulo the maximal ideal.

Let $F_1,\ldots,F_r,G_1,\ldots,G_s \in V[X]$ irreducible over $K$ such that
\begin{itemize}
\item[(i)] $e := \min\{ \sum_{i = 1}^r \mathsf v(F_i(a)) \mid a \in R_1\} = \min \{ \sum_{j = 1}^s \mathsf v(G_j(a))\mid a \in R_m\}$ for all $m \in \{2,\ldots,q\}$,


\item[(ii)] $\min \{ \mathsf v(G_j(a)) \mid a \in R_1\} = 0$ for all $j \in \{1,\ldots,s\}$,
\item[(iii)] $\min \{ \mathsf v(F_i(a)) \mid a \in R_m\} = 0$ for all $i \in \{1,\ldots,r\}$ and $m \in \{2,\ldots,q\}$.

\item[(iv)] For $m \in \{1,\ldots,q\}$, there exists $r_m \in R_m$ such that $\min\{ \mathsf v(F_i(a)) \mid a \in R_m\} = \mathsf{v}(F_i(r_m))$ and $\min \{ \mathsf v(G_j(a))\mid a \in R_m\} = \mathsf{v}(G_j(r_m))$ for all $i$ and $j$.
\end{itemize}
Define
\[ H = \frac{( \prod_{i =1}^r F_i ) ( \prod_{j =1}^s G_j )}{\pi^e}.\] 

Then $H \in \Int(V)$.
Furthermore, $H$ is a product of two non-units in $\Int(V)$ if and only if there exist non-empty $I \subsetneqq \{1,\ldots,r\}$ and $J \subsetneqq \{1,\ldots,s\}$ such that  $\min \{ \sum_{i \in I} \mathsf v(F_i(a)) \mid a \in R_1\} = \min \{ \sum_{j \in J} \mathsf v(G_j(a)) \mid a \in R_m\}$ for all $m \in \{2,\ldots,q\}$.
\end{lemma}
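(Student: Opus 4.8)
The plan is to combine the structural description of factorizations in Lemma~\ref{lemma:old} with a valuation computation which, thanks to condition~(iv), can be carried out entirely at the distinguished points $r_1,\dots,r_q$. First I would record the consequences of~(iv) that get used throughout: for any $I\subseteq\{1,\dots,r\}$ one has $\min\{\sum_{i\in I}\mathsf v(F_i(a))\mid a\in R_1\}=\sum_{i\in I}\mathsf v(F_i(r_1))$, and likewise for products of the $G_j$ over each $R_m$; in particular the minimum over $R_m$ of any sub-product of the $F_i$'s and $G_j$'s is attained at $r_m$. Combined with (i)--(iii) this yields $\sum_{i=1}^r\mathsf v(F_i(r_1))=e=\sum_{j=1}^s\mathsf v(G_j(r_m))$ for all $m\ge 2$ and $\mathsf v(G_j(r_1))=0$ for every $j$. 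From here $H\in\Int(V)$ is immediate: for $a\in R_1$ the factor $\prod_iF_i$ has $\mathsf v$-value $\ge e$ while $\prod_jG_j$ has value $\ge 0$, and for $a\in R_m$ with $m\ge 2$ it is the other way round; evaluating at $r_1$ shows the minimum $e$ is actually attained, so $H$ is exactly of the shape to which Lemma~\ref{lemma:old} applies, with index set $\{1,\dots,r\}\sqcup\{1,\dots,s\}$.

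For the ``if'' direction, given non-empty proper subsets $I\subsetneq\{1,\dots,r\}$ and $J\subsetneq\{1,\dots,s\}$ realizing a common value $e'=\min\{\sum_{i\in I}\mathsf v(F_i(a))\mid a\in R_1\}=\min\{\sum_{j\in J}\mathsf v(G_j(a))\mid a\in R_m\}$ for all $m\ge 2$, I would set $g_1=\big(\prod_{i\in I}F_i\prod_{j\in J}G_j\big)/\pi^{e'}$ and $g_2=\big(\prod_{i\notin I}F_i\prod_{j\notin J}G_j\big)/\pi^{e-e'}$. By the identities above, $e'=\sum_{i\in I}\mathsf v(F_i(r_1))$, hence $0\le e'\le e$ and $g_1g_2=H$; moreover $\sum_{i\notin I}\mathsf v(F_i(r_1))=e-e'$ and $\sum_{j\notin J}\mathsf v(G_j(r_m))=e-e'$, so $g_1,g_2\in\Int(V)$ follows by the same residue-class check as for $H$. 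Since $I$ and its complement in $\{1,\dots,r\}$ are both non-empty, $g_1$ and $g_2$ have positive degree, hence are non-units, and $H=g_1g_2$ is the desired factorization.

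For the ``only if'' direction, Lemma~\ref{lemma:old} expresses a factorization $H=g_1g_2$ into non-units as $g_t=\big(\prod_{i\in I_t}F_i\prod_{j\in J_t}G_j\big)/\pi^{e_t}$ with $\{1,\dots,r\}=I_1\sqcup I_2$, $\{1,\dots,s\}=J_1\sqcup J_2$ and $e_1+e_2=e$. Evaluating $g_t\in\Int(V)$ at $r_1$ and at $r_m$ (where, by the first step, the relevant minima are attained) gives $\sum_{i\in I_t}\mathsf v(F_i(r_1))\ge e_t$ and $\sum_{j\in J_t}\mathsf v(G_j(r_m))\ge e_t$ for $m\ge 2$; summing over $t\in\{1,2\}$ and using the identities of the first step forces equality throughout, so $I=I_1$ and $J=J_1$ realize $e_1$ as the asserted common minimum. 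The step I expect to be the main obstacle is verifying that $I_1$ and $J_1$ can be taken non-empty and proper, i.e.\ excluding the degenerate splittings in which one of the two factors absorbs all, or none, of the $F_i$'s, or all, or none, of the $G_j$'s. I would dispose of those by the same valuation analysis, showing that such a factor is forced to be a unit, or forces $e=0$, or fails to be integer-valued; the essential inputs are (ii), (iii), and the non-degeneracy of the $F_i$ and $G_j$ on their respective residue classes, namely that each $F_i$ has positive $\mathsf v$-value on $R_1$ and each $G_j$ positive $\mathsf v$-value on some $R_m$ with $m\ge 2$. After a possible relabelling of $g_1$ and $g_2$, this yields the required witness sets and completes the proof.
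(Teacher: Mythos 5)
Your approach mirrors the paper's: integrality by evaluation at the distinguished points $r_m$, direct construction of the two factors for the ``if'' direction, and Lemma~\ref{lemma:old} plus a valuation count for the ``only if'' direction. Your summation variant (adding $\sum_{i\in I_t}\mathsf v(F_i(r_1))\geq e_t$ over $t$ and comparing with $\sum_{i=1}^r \mathsf v(F_i(r_1))=e=e_1+e_2$ to force equalities) is a clean, direct alternative to the paper's argument by contradiction, and it is correct.

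The ``main obstacle'' you flag at the end is the right thing to worry about, and here you are actually more careful than the paper. Lemma~\ref{lemma:old} only guarantees that each factor absorbs a non-empty part of the combined index set $\{1,\ldots,r\}\sqcup\{1,\ldots,s\}$; it does not by itself produce non-empty proper $I\subsetneqq\{1,\ldots,r\}$ \emph{and} non-empty proper $J\subsetneqq\{1,\ldots,s\}$, yet the paper's proof asserts exactly this ``by Lemma~\ref{lemma:old}.'' Your proposed repair, however, relies on a positivity property --- each $F_i$ has $\mathsf v(F_i(r_1))>0$ and each $G_j$ has $\mathsf v(G_j(r_m))>0$ for some $m\ge 2$ --- that is \emph{not} a consequence of hypotheses (i)--(iv). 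Without it the stated equivalence can fail: take $e=0$, $r=1$, $s\ge 1$; then $H=F_1\prod_j G_j$ is already a product of non-units, but there is no non-empty proper $I\subsetneqq\{1\}$. (More generally, any $F_i$ or $G_j$ that is a unit at the relevant $r_m$'s can be split off on its own.) The positivity does hold in the only place the lemma is applied, namely in the proof of Theorem~\ref{theorem:setsoflenghts}, where all entries $M_i$, $N_i$ are chosen to be positive integers, so the paper's main result is unaffected; but to make your argument (and the lemma as stated) airtight, the positivity should be stated as an explicit hypothesis rather than derived. Once it is granted, your sketch for disposing of degenerate splittings goes through exactly as you describe.
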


\begin{proof}
Clearly $H$ is integer-valued over $V$. If there exist $I$ and $J$ as in the lemma, define $e' = \min_{a \in R_1} \sum_{i \in I} \mathsf v(F_i(a)) = \sum_{i \in I} \mathsf v(F_i(r_1)) = \min_{a \in R_m} \sum_{j \in J} \mathsf v(G_j(a)) = \sum_{j \in J} \mathsf v(G_j(r_m))$ (for $m\in \{2,\ldots, q\}$). Then clearly 
\begin{align*}
H = \frac{( \prod_{i \in I} F_i ) ( \prod_{j \in J} G_j )}{\pi^{e'}} \cdot \frac{( \prod_{i \in \{ 1, \ldots, r \}\setminus I} F_i ) ( \prod_{j \in \{ 1, \ldots,s \} \setminus J} G_j )}{\pi^{e-e'}}
\end{align*}
is a decomposition into two non-units in $\Int(V)$.

Conversely, if $H = H_1 \cdot H_2$ is a decomposition of $H$ where $H_1$ and $H_2$ are non-units of $\Int(V)$, then, by Lemma~\ref{lemma:old}, there exist non-empty $I \subsetneqq \{1,\ldots,r\}$ and $J \subsetneqq \{1,\ldots,s\}$ such that 
\[ H_1 = \frac{( \prod_{i \in I} F_i ) ( \prod_{j \in J} G_j )}{\pi^{e'}} \text{ and } H_2 = \frac{( \prod_{i \in \{ 1, \ldots, r \}\setminus I} F_i ) ( \prod_{j \in \{ 1, \ldots,s \} \setminus J} G_j )}{\pi^{e-e'}} \]
for some $e' \in \{0,\ldots,e\}$. Assume to the contrary that $\min_{a \in R_1} \sum_{i \in I} \mathsf v(F_i(a)) \neq \min_{a \in R_m} \sum_{j \in J} \mathsf v(G_j(a))$ for some $m \in \{2,\ldots,q\}$. Exchanging, if necessary, the roles of $I$ and $\{ 1, \ldots, r \}\setminus I$ respectively $J$ and $\{ 1, \ldots,s \} \setminus J$, we may assume without loss of generality that $\min_{a \in R_1} \sum_{i \in I} \mathsf v(F_i(a)) > \min_{a \in R_m} \sum_{j \in J} \mathsf v(G_j(a))$. Since $H_1$ is an integer-valued polynomial on $V$, it follows that 
\[ \min_{a \in R_1} \sum_{i \in I} \mathsf v(F_i(a)) > \min_{a \in R_m} \sum_{j \in J} \mathsf v(G_j(a)) \geq e'.\] 
Hence we get 
\[ \min_{a \in R_1} \sum_{i \in \{ 1, \ldots, r \}\setminus I} \mathsf v(F_i(a)) < \min_{a \in R_m} \sum_{j \in \{ 1, \ldots,s \} \setminus J} \mathsf v(G_j(a)) \leq e-e', \]
which is a contradiction because $H_2 \in \Int(V)$. 
\end{proof}

By a \textit{pair of ordered partitions} of sets $I$ and $J$, we mean an equivalence class of tuples $((I_1,\ldots,I_\ell),(J_1,\ldots,J_\ell))$, where the $I_\lambda$ form a partition of $I$ and the $J_\lambda$ of $J$, under the equivalence relation where $((I_1,\ldots,I_\ell),(J_1,\ldots,J_\ell))$ is identified with $((I_{\sigma(1)},\ldots,I_{\sigma(\ell)}),(J_{\sigma(1)},\ldots,J_{\sigma(\ell)}))$ for all permutations $\sigma$ of $\{1,\ldots,\ell\}$. The positive integer $\ell$ is called the \textit{length} of the pair of ordered partitions.

\begin{lemma}\label{lemma:factorizations}
Let $V$ be a discrete valuation domain with finite residue field of cardinality $q$ and residue classes $R_1,\ldots,R_q$. Let $K$ be the quotient field of $V$, $\mathsf v: K^\times \to \Z$ its normalized valuation and $\pi \in V$ a prime element, that is, $\mathsf v(\pi) =1$. Let $F_1,\ldots,F_r,G_1,\ldots,G_s \in V[X]$ irreducible over $K$ and pairwise non-associated over $K$ such that
\begin{itemize}
\item[(i)] $e := \min\{ \sum_{i = 1}^r \mathsf v(F_i(a)) \mid a \in R_1\} = \min \{ \sum_{j = 1}^s \mathsf v(G_j(a))\mid a \in R_m\}$ for all $m \in \{2,\ldots,q\}$,


\item[(ii)] $\min \{ \mathsf v(G_j(a)) \mid a \in R_1\} = 0$ for all $j \in \{1,\ldots,s\}$,
\item[(iii)] $\min \{ \mathsf v(F_i(a)) \mid a \in R_m\} = 0$ for all $i \in \{1,\ldots,r\}$ and $m \in \{2,\ldots,q\}$.
\item[(iv)] For $m \in \{1,\ldots,q\}$, there exists $r_m \in R_m$ such that $\min\{ \mathsf v(F_i(a)) \mid a \in R_m\} = \mathsf{v}(F_i(r_m))$ and $\min \{ \mathsf v(G_j(a))\mid a \in R_m\} = \mathsf{v}(G_j(r_m))$ for all $i$ and $j$.
\end{itemize}
Define
\[ H = \frac{( \prod_{i =1}^r F_i ) ( \prod_{j =1}^s G_j )}{\pi^e}.\] 
Then, for every $\ell >0$, a bijective correspondence between, on the one hand, pairs of ordered partitions $ (I_1,\ldots,I_\ell)$ of $\{1,\ldots,r\}$ and $(J_1, \ldots, J_\ell)$ of $\{1,\ldots,s\}$ satisfying
\begin{itemize}
\item[(1)] $e_\lambda:=\min \{ \sum_{i \in I_\lambda} \mathsf v(F_i(a)) \mid a \in R_1\} = \min \{ \sum_{j \in J_\lambda} \mathsf v(G_j(a)) \mid a \in R_m\}$ for all $m \in \{2,\ldots,q\}$ and $\lambda \in \{1,\ldots,\ell\}$, and

\item[(2)] for all $\lambda \in \{1,\ldots,\ell\}$ and all non-empty $I \subsetneqq I_\lambda$ and $J\subsetneqq J_\lambda$ there is $m \in \{2,\ldots,q\}$ such that $\min \{ \sum_{i \in I} \mathsf v(F_i(a)) \mid a \in R_1\} \neq \min \{ \sum_{j \in J} \mathsf v(G_j(a)) \mid a \in R_m\}$

\end{itemize}
and, on the other hand, essentially different factorizations of $H$ into $\ell$ irreducible elements of $\Int(V)$, is given by
\begin{align*}
((I_1, \ldots, I_\ell),  (J_1, \ldots, J_\ell)) \mapsto  \frac{( \prod_{i \in I_1} F_i ) ( \prod_{j \in J_1} G_j )}{\pi^{e_1}} \cdot \ldots \cdot \frac{( \prod_{i \in I_\ell} F_i ) ( \prod_{j \in J_\ell} G_j )}{\pi^{e_\ell}}.
\end{align*} 
\end{lemma}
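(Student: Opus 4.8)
The plan is to set up the asserted bijection in two stages: first show that the displayed map does land in the set of essentially different length-$\ell$ factorizations of $H$ into irreducibles, and then show it is both injective and surjective. For the forward direction, fix a pair of ordered partitions $((I_1,\ldots,I_\ell),(J_1,\ldots,J_\ell))$ satisfying (1) and (2). The numbers $e_\lambda$ from (1) are well defined, and using hypothesis (iv) together with (ii), (iii) and (i) one checks that each factor $g_\lambda = (\prod_{i \in I_\lambda}F_i)(\prod_{j \in J_\lambda}G_j)/\pi^{e_\lambda}$ lies in $\Int(V)$ by verifying $\min\{\mathsf v(g_\lambda(a)) \mid a \in R_m\} \geq 0$ residue class by residue class (for $R_1$ use (1) and the fact that the $G_j$ contribute nothing there by (ii); for $R_m$, $m \geq 2$, symmetrically); and $\sum_\lambda e_\lambda = e$ because the $I_\lambda$ partition $\{1,\ldots,r\}$ and $R_1$-valuations add. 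That $g_\lambda$ is actually \emph{irreducible} in $\Int(V)$ is exactly the content of Lemma~\ref{lemma:irreducibles} applied to the sub-families $\{F_i\}_{i \in I_\lambda}$, $\{G_j\}_{j \in J_\lambda}$: hypotheses (i)--(iv) of that lemma are inherited (the $e$ there is $e_\lambda$ here), and condition (2) is precisely the negation of the ``product of two non-units'' criterion, so $g_\lambda$ has no non-trivial factorization. Hence $H = g_1\cdots g_\ell$ is a factorization into $\ell$ irreducibles.

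For injectivity, suppose two pairs of ordered partitions give essentially the same factorization. Since the $F_i$ and $G_j$ are pairwise non-associated irreducibles over $K$, the multiset $\{F_1,\ldots,F_r,G_1,\ldots,G_s\}$ of prime-over-$K$ factors of $H$ in $K[X]$ is determined, and any associate in $\Int(V)$ of a factor $g_\lambda = (\prod_{i \in I_\lambda}F_i)(\prod_{j \in J_\lambda}G_j)/\pi^{e_\lambda}$ has, up to a unit of $\Int(V)$ (which is a unit of $V$, hence does not change which $F_i,G_j$ occur or the $\pi$-power up to the normalization forced by $\min$-integer-valuedness), the same index sets $I_\lambda, J_\lambda$ and the same exponent $e_\lambda$. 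So matching the factors up to order and associates matches the unordered pairs $\{(I_\lambda,J_\lambda)\}$, which is exactly equality of the pairs of ordered partitions in the sense defined before the lemma. For surjectivity, start from an arbitrary factorization $H = g_1\cdots g_\ell$ into irreducibles of $\Int(V)$. Apply Lemma~\ref{lemma:old} (with the index set $I = \{1,\ldots,r\}\,\dot\cup\,\{1,\ldots,s\}$ and the $f_i$ being the $F_i$ and $G_j$) to conclude each $g_\lambda = (\prod_{i \in I_\lambda}F_i)(\prod_{j \in J_\lambda}G_j)/\pi^{e_\lambda}$ for a partition $(I_\lambda), (J_\lambda)$ of the two index sets and non-negative integers $e_\lambda$ summing to $e$; none of the $I_\lambda, J_\lambda$ is empty because a $g_\lambda$ of the form $(\text{constant in }K)\cdot(\text{monomial in }\pi)$ would force, by integer-valuedness and $\sum e_\lambda = e$ being tight, either a unit or a contradiction with the minimality of $e$. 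It remains to check (1) and (2). For (1): integer-valuedness of $g_\lambda$ on $R_1$ gives $\min_{a\in R_1}\sum_{i\in I_\lambda}\mathsf v(F_i(a)) \geq e_\lambda$ (the $G_j$ contribute $0$ on $R_1$ by (ii)), and on $R_m$ ($m\geq 2$) gives $\min_{a\in R_m}\sum_{j\in J_\lambda}\mathsf v(G_j(a)) \geq e_\lambda$; summing either family of inequalities over $\lambda$ yields $e \geq \sum e_\lambda = e$ in each case, so all inequalities are equalities, and moreover the two quantities $\min_{R_1}\sum_{I_\lambda}\mathsf v(F_i)$ and $\min_{R_m}\sum_{J_\lambda}\mathsf v(G_j)$ both equal $e_\lambda$ — this is (1), and it forces the $e_\lambda$ to be the canonical ones. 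For (2): if some $g_\lambda$ admitted sub-partitions $I\subsetneq I_\lambda$, $J\subsetneq J_\lambda$ with matching minima on all $R_m$, then Lemma~\ref{lemma:irreducibles} applied to the $\lambda$-subfamily would make $g_\lambda$ a product of two non-units, contradicting irreducibility.

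The main obstacle I expect is the bookkeeping around the $\pi$-exponents and the boundary residue class $R_1$ versus $R_2,\ldots,R_q$: one must repeatedly use hypotheses (i)--(iv) to pin down that the only consistent choice of $e_\lambda$ is $\min_{R_1}\sum_{i\in I_\lambda}\mathsf v(F_i(a))$, simultaneously equal to each $\min_{R_m}\sum_{j\in J_\lambda}\mathsf v(G_j(a))$, and that these local minima genuinely add up across a partition (which uses that $F_i$ contributes $0$ on $R_m$ for $m\geq2$ and $G_j$ contributes $0$ on $R_1$, together with the evaluation point $r_m$ from (iv) realizing all the minima at once). Everything else — that the map is well defined into irreducible factorizations, injective, and surjective — is then a clean consequence of Lemma~\ref{lemma:old} (which controls the shape of any factorization) and Lemma~\ref{lemma:irreducibles} (whose reducibility criterion is, essentially verbatim, the negation of condition (2)). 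I would also remark once, early on, that hypothesis (i) for the full families is the special case $\ell = 1$ of (1), which is why $H$ itself is integer-valued and why the exponent $e$ in the definition of $H$ is the correct normalization.
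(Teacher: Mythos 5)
Your proof is correct and takes essentially the same route the paper intends: the paper disposes of this lemma in one line (``This follows immediately from Lemma~\ref{lemma:irreducibles}''), and what you write out — integer-valuedness and irreducibility of each $g_\lambda$ via Lemma~\ref{lemma:irreducibles} applied to the sub-families, injectivity from unique factorization of $K[X]$ together with the $F_i,G_j$ being pairwise non-associated and units of $\Int(V)$ being units of $V$, and surjectivity via Lemma~\ref{lemma:old} followed by the forced-equality argument that pins down $e_\lambda = \sum_{i\in I_\lambda}\mathsf v(F_i(r_1)) = \sum_{j\in J_\lambda}\mathsf v(G_j(r_m))$ — is precisely the unpacking of that one-liner. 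One small remark: your justification that no $I_\lambda$ or $J_\lambda$ is empty only treats the case where both are empty; if exactly one is empty (say $I_\lambda=\emptyset$), one gets $e_\lambda=0$ and $g_\lambda$ a product of some $G_j$'s with $\mathsf v(G_j(r_m))=0$ for all $m$, and ruling this out genuinely needs that each $F_i$ satisfies $\mathsf v(F_i(r_1))>0$ and each $G_j$ satisfies $\mathsf v(G_j(r_m))>0$ for $m\geq 2$ — a positivity property that the construction in the proof of Theorem~\ref{theorem:setsoflenghts} guarantees but which the hypotheses (i)--(iv) leave implicit; the paper glosses over exactly the same point inside the proof of Lemma~\ref{lemma:irreducibles}, so this is a shared gap rather than a defect peculiar to your write-up.
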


\begin{proof}
This follows immediately from Lemma~\ref{lemma:irreducibles}.
\end{proof}


\begin{theorem}\label{theorem:setsoflenghts}
Let $V$ be a discrete valuation domain with finite residue field. Suppose that the quotient field $K$ of $V$ admits a valuation ring independent from $V$ whose maximal ideal is principal or whose residue field is finite. Let $k$ be a positive integer and $1 < n_1 \leq \ldots \leq n_k$ integers. 

Then there exists an integer-valued polynomial $H \in \Int(V)$ which has precisely $k$ essentially different factorizations into irreducible elements of $\Int(V)$ whose lengths are exactly $n_1,\ldots,n_k$.
\end{theorem}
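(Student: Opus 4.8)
The plan is to produce $H$ in exactly the form handled by Lemma~\ref{lemma:factorizations} and to choose its irreducible factors so that the bijection of that lemma turns the enumeration of the factorizations of $H$ into a purely linear-algebraic question about subset sums, which is then settled by Proposition~\ref{proposition:existenceofpairs}. Write $q$ for the cardinality of the residue field, $R_1,\ldots,R_q$ for the residue classes, and fix once and for all representatives $r_m\in R_m$ and a prime element $\pi$. Index two families of polynomials $(F_d)_{d}$ and $(G_d)_{d}$ by the grid $\Gamma:=[n_1]\times\cdots\times[n_k]$, so that in the notation of Lemma~\ref{lemma:factorizations} we will have $r=s=\lvert\Gamma\rvert=n_1\cdots n_k$. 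Let $(M,N)$ be the pair of $(n_1\times\cdots\times n_k)$-arrays with positive integer entries furnished by Proposition~\ref{proposition:existenceofpairs}: thus $(M,N)\in\overline Z$, and $(M,N)\in Z_{I,J}$ only if $I=J$ is a union of parallel hyperplanes $H_\ell(S)$.

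First I would build the factors. For $d\in\Gamma$ I want $F_d\in V[X]$ irreducible over $K$ with $\min\{\mathsf v(F_d(a))\mid a\in R_1\}=M_d$, attained at $r_1$, and $\min\{\mathsf v(F_d(a))\mid a\in R_m\}=0=\mathsf v(F_d(r_m))$ for $m\geq 2$. To this end I take $f_d=\prod_{i=1}^{M_d}(X-\alpha_{d,i})$ with $\alpha_{d,i}\equiv r_1\pmod\pi$ and $\mathsf v(r_1-\alpha_{d,i})=1$; a direct computation shows that $f_d$ has precisely this valuation profile and that $\mathsf v(r_k-\alpha_{d,i})\in\{0,1\}$ for every $k$, so Lemma~\ref{lemma:glueing} (when $K$ carries a valuation ring independent of $V$ with non-zero principal maximal ideal) or Lemma~\ref{lemma:glueing-finite} (when it carries one with finite residue field) yields a monic $F_d\in V[X]$, irreducible over $K$, with the same profile on every $R_m$ and all minima attained at $r_m$. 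Symmetrically, I take $g_d=\prod_{m=2}^{q}\prod_{i=1}^{N_d}(X-\beta_{d,m,i})$ with $\beta_{d,m,i}\equiv r_m\pmod\pi$ and $\mathsf v(r_m-\beta_{d,m,i})=1$, and glue to obtain a monic $G_d\in V[X]$ irreducible over $K$ with $\min\{\mathsf v(G_d(a))\mid a\in R_m\}=N_d=\mathsf v(G_d(r_m))$ for $m\geq 2$ and $\min\{\mathsf v(G_d(a))\mid a\in R_1\}=0$. Exploiting the freedom in the choice of the linear factors and in the approximation step, I would take all the $F_d$ and $G_d$ pairwise distinct, hence pairwise non-associated over $K$ since they are monic.

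Then I would set $H=\pi^{-e}\bigl(\prod_{d\in\Gamma}F_d\bigr)\bigl(\prod_{d\in\Gamma}G_d\bigr)$ with $e=\sum_{d\in\Gamma}M_d=\sum_{d\in\Gamma}N_d$, the last equality holding because $(M,N)\in\overline Z$. Since every $F_d$ attains its $R_m$-minimum at the single point $r_m$, the minimum over $R_m$ of $\sum_{d\in I}\mathsf v(F_d(a))$ equals $\sum_{d\in I}M_d$ when $m=1$ and $0$ when $m\geq 2$, and dually for the $G_d$; thus the hypotheses (i)--(iv) of Lemma~\ref{lemma:factorizations} are met, and that lemma identifies the essentially different factorizations of $H$ into $\ell$ irreducibles of $\Int(V)$ with the pairs of ordered partitions $((I_1,\ldots,I_\ell),(J_1,\ldots,J_\ell))$ of $\Gamma$ for which $\sum_{d\in I_\lambda}M_d=\sum_{d\in J_\lambda}N_d$ for all $\lambda$ and no proper non-empty $I\subsetneq I_\lambda$, $J\subsetneq J_\lambda$ satisfy $\sum_{d\in I}M_d=\sum_{d\in J}N_d$. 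Now the property of $(M,N)$ forces $I_\lambda=J_\lambda=:A_\lambda$ to be a union of parallel hyperplanes; an elementary computation on which unions of hyperplanes lie inside which (using $n_\ell\geq 2$ for every $\ell$) shows that the only unions of parallel hyperplanes properly contained in $H_c(S)$ with $S\subsetneq[n_c]$ are the $H_c(S')$ with $S'\subseteq S$, so the second condition forces each $A_\lambda$ to be a single coordinate slice $H_c(\{t\})$ (in particular $A_\lambda\neq\Gamma$); since slices in different coordinate directions intersect, all the $A_\lambda$ are slices in one common direction $\ell_0$, and then the partition is precisely $(H_{\ell_0}(\{1\}),\ldots,H_{\ell_0}(\{n_{\ell_0}\}))$. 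Conversely each of the $k$ such coordinate slicings satisfies both conditions, and they are pairwise distinct; hence $H$ has precisely $k$ essentially different factorizations, of lengths $n_1,\ldots,n_k$. (For $k=1$ the grid is one-dimensional, $(M,N)$ is simply a pair of equal arrays with pairwise distinct subset sums, and the same argument goes through.)

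The main obstacle I anticipate is the faithful translation between the linear-algebraic conditions ``$\sum_I M=\sum_J N$'' and the valuation conditions of Lemma~\ref{lemma:factorizations}: this is correct only because on each residue class $R_m$ \emph{every} one of the polynomials $F_d,G_d$ attains its minimum at one and the same point $r_m$, so that the minimum of a sum equals the sum of the minima. Securing this simultaneously for all $\lvert\Gamma\rvert$ polynomials in each family is exactly what obliges us to route the construction through the glueing Lemmas~\ref{lemma:glueing} and~\ref{lemma:glueing-finite} instead of writing the $F_d$ and $G_d$ down by hand; the genuinely combinatorial difficulty, that no unexpected subset sum of $M$ equals one of $N$, has already been isolated in Proposition~\ref{proposition:existenceofpairs}, which we may invoke.
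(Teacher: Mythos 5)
Your proof is correct and follows essentially the same route as the paper: reduce to Lemma~\ref{lemma:factorizations} by indexing both families of irreducibles over the grid $[n_1]\times\cdots\times[n_k]$, manufacture the valuation profiles from a pair $(M,N)$ supplied by Proposition~\ref{proposition:existenceofpairs} via products of linear factors, upgrade those products to $K$-irreducible polynomials with the glueing Lemmas~\ref{lemma:glueing} and~\ref{lemma:glueing-finite}, and then observe that the sum-equalities in Lemma~\ref{lemma:factorizations} collapse to subset-sum identities $\sum_I M = \sum_J N$, which by the choice of $(M,N)$ occur only for $I=J$ a union of parallel hyperplanes. The only cosmetic differences are that you spell out the final combinatorial step (that the admissible pairs of ordered partitions are exactly the $k$ coordinate slicings $(H_\ell(1),\ldots,H_\ell(n_\ell))$) in more detail than the paper does, and that you fold the case $k=1$ into the general framework, whereas the paper dispatches it separately with $H=X^{n_1}$ — a safer choice since the combinatorial lemmas feeding Proposition~\ref{proposition:existenceofpairs} are stated for $k>1$.
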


\begin{proof}
If $k=1$ then $H= X^{n_1}$ has the desired property. So let $k\geq 2$. By Lemma~\ref{lemma:factorizations} it suffices to construct polynomials $F_i$ and $G_j$ as in the hypothesis of this lemma such that there are exactly $k$ different pairs of ordered partitions of the index sets satisfying the conditions of the lemma; and their lengths are exactly $n_1,\ldots,n_k$.

We set $r = s = n_1 \cdots n_k$. Let $(M,N) \in (\N^{n_1\times \ldots \times n_k})^2$ be a pair of arrays such that $(M,N) \in \overline Z$ and $(M,N) \notin Z_{I,J}$ for non-empty $I,J \subseteq [n_1]\times \ldots \times [n_k]$ unless $I$ and $J$ are the same union of parallel hyperplanes (see Notation~\ref{notation:hyperplanes}, \ref{notation:tensors} and \ref{notation:pairsoftensors}). Such a pair exists by Proposition~\ref{proposition:existenceofpairs}.  Recall that $M_i$ for $i \in [n_1]\times \ldots \times [n_k]$ denotes the $i$-th entry of $M$.


Let $R_1,\ldots,R_q$ be the residue classes of $V$ and $\mathsf{v}: K^\times \to \Z$ its normalized valuation. Let $r_m \in R_m$ be arbitrary for each $m \in \{1,\ldots,q\}$. Since the $R_m$ are infinite, we can pick, for each $i \in [n_1]\times \ldots \times [n_k]$, a set of $M_i$ distinct elements $a_1^{i,1},\ldots,a_{M_i}^{i,1}\in V$ with $\mathsf{v}(r_1 - a_j^{i,1}) = 1$ for all $j$, and for each $m \in \{2,\ldots,q\}$ a set of $N_i$ distinct elements $a_1^{i,m},\ldots,a_{N_i}^{i,m}\in V$ with $\mathsf{v}(r_m - a_j^{i,m}) = 1$ for all $j$, and such that $a_{j_1}^{i_1,m_1} = a_{j_2}^{i_2,m_2}$ implies $j_1 = j_2$, $i_1 = i_2$ and $m_1 = m_2$.

For each $i \in [n_1]\times \ldots \times [n_k]$, we set 
\begin{align*}
f_i &= \prod_{j = 1}^{M_i} (X - a_j^{i,1}), \\
g_i &= \prod_{m = 2}^q \prod_{j=1}^{N_i} (X-a_j^{i,m}).
\end{align*}
By Lemma~\ref{lemma:glueing} and Lemma~\ref{lemma:glueing-finite}, there exist, for each $i \in [n_1]\times \ldots \times [n_k]$, polynomials $F_i, G_i \in V[X]$, irreducible in $K[X]$, which, furthermore, by construction are all pairwise non-associated in $K[X]$ and for all non-empty $I,J \subseteq  [n_1]\times \ldots \times [n_k]$, satisfy the system of equalities
\begin{equation}\label{equation}
\min_{a \in R_1} \sum_{i \in I} \mathsf v(F_i(a)) = \min_{a \in R_m} \sum_{j \in J} \mathsf v(G_j(a)) \qquad (m = 2, \ldots, q)
\end{equation}
if and only if $I$ and $J$ are both the same union of parallel hyperplanes. Indeed, for all $i \in [n_1]\times \ldots \times [n_k]$ and $m \in \{1,\ldots,q\}$, we have $M_i = \min_{a \in R_m} \mathsf{v}(f_i(a)) = \mathsf{v}(f_i(r_m)) = \mathsf{v}(F_i(r_m)) =\min_{a \in R_m} \mathsf{v}(F_i(a))$ and analogously for $N_i$, $g_i$ and $G_i$.

Hence the equation in (\ref{equation}) holds if and only if $\sum_{i \in I} M_i = \sum_{j \in J} N_j$ which is the case if and only if $I = J$ is a union of parallel hyperplanes.
So, the only admissible (in the sense of Lemma~\ref{lemma:factorizations}) pairs of ordered partitions of the index set $[n_1]\times \ldots \times [n_k]$, namely, those that correspond to factorizations into irreducibles, are the ones of the form $((H_r(1), \ldots, H_r(n_r)),(H_r(1), \ldots, H_r(n_r)))$ for $r \in \{1,\ldots,k\}$. These are exactly $k$ many of lengths $n_1,\ldots,n_k$.
\end{proof}

\begin{corollary}
The conclusion of Theorem~\ref{theorem:setsoflenghts} holds in each of the following cases:
\begin{itemize}
\item[(1)] $V$ is a valuation ring of a global field.
\item[(2)] $V$ is a discrete valuation domain with finite residue field such that the quotient field of $V$ is a purely transcendental extension of an arbitrary field.
\item[(3)] $V$ is a discrete valuation domain with finite residue field such that the quotient field $K$ of $V$ is a finite extension of a field $L$ that admits a valuation ring independent from $V \cap L$ whose maximal ideal is principal or whose residue field is finite.
\end{itemize}

That is, in each of these three cases, for all positive integers $k$ and $1 < n_1 \leq \ldots \leq n_k$, there exists an integer-valued polynomial $H \in \Int(V)$ which has precisely $k$ essentially different factorizations into irreducible elements of $\Int(V)$ whose lengths are exactly $n_1,\ldots,n_k$.

\end{corollary}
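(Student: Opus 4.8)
The plan is to reduce everything to Theorem~\ref{theorem:setsoflenghts}. In each of the three cases $V$ is a discrete valuation domain with finite residue field --- for~(1) by the remark at the end of the Preliminaries, in~(2) and~(3) by hypothesis --- so by that theorem it is enough to exhibit, in each case, a valuation ring of $K$ that is independent from $V$ and whose maximal ideal is principal (equivalently, by Remark~\ref{remark:valuation}, a non-trivial valuation ring whose value group has a least positive element, i.e.\ a DVR) or whose residue field is finite. The recurring tool will be the elementary observation that two distinct rank-one valuation rings of $K$ are automatically independent: a rank-one valuation ring $U\subsetneq K$ has no valuation overrings besides $U$ and $K$, so if $U_1\neq U_2$ are both rank one, then $K$ is the only valuation ring of $K$ containing both, which is precisely independence. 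Since $V$ is a DVR, hence rank one, for the first two cases it will suffice to produce a single rank-one valuation ring $W\neq V$ of the required type. \textbf{Case~(1).} A global field has infinitely many finite places, each --- again by the remark at the end of the Preliminaries --- a DVR with finite residue field, and $V$ is one of them; any finite place $W\neq V$ then works.

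\textbf{Case~(2).} I would write $K$ as a purely transcendental extension of $F$, fix a transcendence basis, single out one element $T$, and let $F'$ be the subfield of $K$ generated by $F$ together with the remaining basis elements, so that $K=F'(T)$. The ring $F'[T]$ is a principal ideal domain with infinitely many maximal ideals (equivalently: there are infinitely many monic irreducible polynomials over any field), and for each maximal ideal $(p)$ the localisation $F'[T]_{(p)}$ is a DVR of $K$ with principal maximal ideal, distinct $p$ giving distinct valuation rings. As $V$ equals at most one of them, picking $W=F'[T]_{(p)}\neq V$ yields a rank-one valuation ring distinct from $V$, hence independent from $V$.

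\textbf{Case~(3).} Let $W_0$ be a valuation ring of $L$, independent from $V\cap L$, whose maximal ideal is principal or whose residue field is finite. Then $W_0$ is a \emph{proper} valuation ring of $L$: the trivial valuation ring of $L$ has zero maximal ideal and residue field $L$, which is infinite (else its finite extension $K$ would be finite, contradicting that $K$ contains the infinite domain $V$). Also $V\cap L$ is proper, since $L\subseteq V$ would make $V=K$ ($V$ being integrally closed and $K/L$ algebraic), contrary to $V$ being a DVR. Now extend $W_0$ to a valuation ring $W$ of $K$ lying over it (valuation rings always extend along field extensions), so that $W\cap L=W_0$ and, in particular, $W\neq K$. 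I claim $W$ is independent from $V$: otherwise some valuation ring $U\neq K$ of $K$ contains both $W$ and $V$, whence $U\cap L$ contains $W_0$ and $V\cap L$ while $U\cap L\neq L$ (again $L\subseteq U$ would force $U=K$), contradicting the independence of $W_0$ and $V\cap L$. Finally, since $K/L$ is finite, the ramification index and the residue degree of $W$ over $W_0$ are finite (the fundamental inequality for extensions of valuations to finite extensions): if $W_0$ has finite residue field, so does $W$; and if the value group of $W_0$ has a least positive element --- hence is isomorphic to $\Z$ --- then the value group of $W$ is torsion-free (being totally ordered) and contains $\Z$ with finite index, hence is again isomorphic to $\Z$, so $W$ has principal maximal ideal. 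In each case $W$ is the valuation ring required by Theorem~\ref{theorem:setsoflenghts}.

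I expect the main obstacle to be Case~(3), and within it the two transfer steps: that passing to a finite field extension alters the residue degree and the ramification index only by finite factors, and the small group-theoretic point that a totally ordered abelian group containing $\Z$ as a subgroup of finite index must itself be isomorphic to $\Z$ (so that a principal maximal ideal is inherited by $W$). The remaining ingredients --- the rank-one independence observation and the bookkeeping that restricting a common valuation overring down to $L$ lands inside a proper valuation ring of $L$ --- are routine.
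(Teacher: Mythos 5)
Your proof is correct and follows the same route as the paper: reduce to Theorem~\ref{theorem:setsoflenghts} by producing, in each case, a valuation ring independent from $V$ with principal maximal ideal or finite residue field --- finite places of a global field in~(1), localisations of $F'[T]$ in~(2), and extending $W_0$ along the finite extension $K/L$ and invoking the fundamental inequality in~(3). You supply more detail than the paper (notably the elementary observation that distinct rank-one valuation rings are automatically independent, and the explicit verification that the value group of $W$ is again $\Z$), but the argument is the same.
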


\begin{proof}
(1) Note that a global field has infinitely many non-equivalent discrete valuations and each valuation ring is discrete. 

(2) The quotient field $K$ of $V$ is also the quotient field of a polynomial ring in one variable over some field and, therefore, $K$ admits infinitely many discrete valuations. 

(3) Let $W_L$ be a valuation domain of $L$ independent from $V \cap L$. Let $W$ be a valuation domain of $K$ extending $W_L$. Then $W$ and $V$ are independent. When the maximal ideal of $W_L$ is principal or its residue field is finite, the same property follows for $W$ by general facts on extensions of valuations~\cite[Chapter VI, § 8.3, Theorem 1]{Bourbaki} and Remark~\ref{remark:valuation}.
\end{proof}

Unfortunately, our construction (including Lemma~\ref{lemma:glueing} and Lemma~\ref{lemma:glueing-finite}) fails for Henselian valued fields, so, in particular, for local fields.
It is therefore natural to pose the following

\begin{problem}
Determine multisets of lengths of factorizations of elements $f \in \Int(V)$, where $V$ is the discrete valuation ring of a Henselian valued field.
\end{problem}

\bibliographystyle{amsplainurl}
\bibliography{bibliography}

\vspace{0.5cm}
\noindent
\textsc{Victor Fadinger, Institute for Mathematics and Scientific Computing, Universität Graz, Heinrichstraße 36, 8010 Graz, Austria} \\
\textit{E-mail address}: \texttt{victor.fadinger@uni-graz.at}\\

\noindent
\textsc{Sophie Frisch, Department of Analysis and Number Theory (5010), Technische Universität Graz, Kopernikusgasse 24, 8010 Graz, Austria} \\
\textit{E-mail address}: \texttt{frisch@math.tugraz.at} \\

\noindent
\textsc{Daniel Windisch, Department of Analysis and Number Theory (5010), Technische Universität Graz, Kopernikusgasse 24, 8010 Graz, Austria} \\
\textit{E-mail address}: \texttt{dwindisch@math.tugraz.at}

\end{document}